\documentclass[12pt,reqno,oneside]{amsart}
\usepackage[headheight=15pt,rmargin=0.5in,lmargin=0.5in,tmargin=0.75in,bmargin=0.75in]{geometry}
\usepackage{imakeidx}
\usepackage{framed}
\usepackage{amsthm}
\usepackage{amssymb}
\usepackage{amsmath}
\usepackage{mathrsfs}
\usepackage{enumitem}
\usepackage{hyperref}
\usepackage{array,makecell}
\usepackage[capitalise,noabbrev]{cleveref}
\usepackage{appendix}
\usepackage{tikz}
\usepackage{tikz-cd}
\usepackage{nomencl}\makenomenclature
\usetikzlibrary{braids,arrows,decorations.markings}

\newtheorem{theorem}{Theorem}[section]
\newtheorem{proposition}{Proposition}[section]

\newtheorem{lemma}{Lemma}[section]

\crefname{method}{Method}{Methods}

\newenvironment{stabular}[2][1]
  {\tabular{#2}}
  {\endtabular}


\newcommand{\legendre}[2]{\left(\frac{#1}{#2}\right)}

\newcommand{\tlegendre}[2]{\textstyle{\left(\frac{#1}{#2}\right)}}
\newcommand{\tmod}[1]{\ \left(\text{mod }#1\right)}

\newcommand{\mc}{\mathcal}

\newcommand{\GL}{\mathrm{GL}}

\newcommand{\Z}{\mathbb{Z}}

\newcommand{\R}{\mathbb{R}}
\newcommand{\C}{\mathbb{C}}

\renewcommand{\H}{\mathbb{H}}

\renewcommand{\a}{\alpha}

\newcommand{\g}{\gamma}
\renewcommand{\d}{\delta}
\newcommand{\z}{\zeta}
\renewcommand{\t}{\theta}

\renewcommand{\l}{\lambda}
\newcommand{\s}{\sigma}
\newcommand{\w}{\omega}

\newcommand{\G}{\Gamma}

\renewcommand{\L}{\Lambda}

\newcommand{\e}{\varepsilon}

\newcommand{\x}{\times}

\newcommand{\lf}{\lfloor}
\newcommand{\rf}{\rfloor}
\newcommand{\wtilde}{\widetilde}

\newcommand{\conj}{\overline}

\DeclareMathOperator{\ord}{\mathrm{ord}}

\DeclareMathOperator*{\Res}{\mathrm{Res}}

\renewcommand{\Re}{\mathrm{Re}}

\DeclareMathOperator{\cc}{\mathbb{C}}
\DeclareMathOperator{\zz}{\mathbb{Z}}
\DeclareMathOperator{\rr}{\mathbb{R}}
\DeclareMathOperator{\qq}{\mathbb{Q}}
\DeclareMathOperator{\inv}{^{-1}}

\newenvironment{psmallmatrix}
  {\left(\begin{smallmatrix}}
  {\end{smallmatrix}\right)}




\title{A converse theorem in half-integral weight}
\author{Steven Creech}
\address{Department of Mathematics \\ Brown University \\ 167 Thayer St, Room 110 \\ Providence, RI 02906 \\ USA}
\email{steven\_creech@brown.edu}

\author{Henry Twiss}
\address{Department of Mathematics \\ Brown University \\ 167 Thayer St, Room 110 \\ Providence, RI 02906 \\ USA}
\email{henry\_twiss@brown.edu}

\thanks{\textbf{Acknowledgements}: The authors would like to thank Min Lee for suggesting the problem and for helpful conversations. The authors would also like to thank Jeff Hoffstein and Junehyuk Jung for additional conversations. The second author was supported by the NSF GRFP.}

\date{\today}

\makeindex

\begin{document}

\begin{abstract}
    In this paper, we prove a converse theorem for half-integral weight holomorphic forms assuming functional equations for $L$-series with additive twists. This result is an extension of Booker, Farmer, and Lee's result in \cite{booker2022extension} to the half-integral weight setting. Similar to their work, the main result of this paper is obtained as a consequence of the half-integral weight Petersson trace formula.
\end{abstract}

\maketitle

\section{Introduction}
  A part of the Langlands program seeks to study automorphic forms on GL(n) and other groups via $L$-functions. Two of the main tools used in this setting are trace formulas and converse theorems. Converse theorems provide sufficient conditions to classify Dirichlet series possessing sufficiently many functional equations as automorphic $L$-functions. The first converse theorem was due to Hamburger (see \cite{hamburger1921riemannsche}) in 1921 which uniquely classified the zeta function $\z(s)$ by its functional equation. Just over 15 years later (see \cite{hecke1936bestimmung}), Hecke extended the work of Hamburger by classifying $L$-functions of cusp forms on the full modular group by their functional equation. Some years later, Weil further extend Hecke’s converse theorem to cusp forms of higher level. For Weil’s result, one needs to assume not only the functional equation of the underlying $L$-function but also the functional equations for all of its twists by quadratic Dirichlet characters (see \cite{weil1967bestimmung}). In 2002, Venkatesh’s was able to prove Hecke’s result using additively twisted $L$-series via Voronoi summation (see \cite{venkatesh2002limiting}) instead of twists by quadratic Dirichlet characters. Recently, Booker, Farmer, and Lee (in \cite{booker2022extension}) extended Venkatesh’s converse theorem to holomorphic forms of arbitrary integral weight, level, and character. All of these results correspond to converse theorems for $\GL(1)$ and $\GL(2)$ automorphic forms.

  In this paper, we use the ideas of Booker, Farmer, and Lee to prove a converse theorem for half-integral weight holomorphic forms. These holomorphic forms correspond to automorphic representations of the metaplectic double cover of $\GL(2)$ rather than $\GL(2)$ itself. In general, the classical study of converse theorems has focused on $L$-series related to non-metaplectic automorphic forms. That is, in the entire class of automorphic forms, attention has focused virtually exclusively on the $n = 1$ case of automorphic forms on the $n$-fold cover of $\GL(r)$ and other groups. This is for a very simple reason: In the $n = 1$ case, automorphic representations factor, making it possible to study the global question locally at individual primes.   For $n \ge 2$ this is not the case. The corresponding representations do \emph{not} factor. Similarly, the $L$-series associated to these automorphic forms do not factor into an Euler product and do not satisfy the Riemann Hypothesis.

  This omission has been justified by several influential voices in the field by the explanation that $L$-series without Euler products are simply not interesting. This is reminiscent of Hecke deciding that the theory of Hecke operators on half-integral weight forms, that is, automorphic forms on the double cover of $\GL(2)$, was not interesting because only operators of the form $T_{p^2}$ were not identically zero. This observation was turned on its head by Shimura's paper (see \cite{shimura1973modular}) establishing a correspondence between automorphic forms on the double cover of $\GL(2)$ that were eigenfunctions of all the $T_{p^2}$ operators, and non-metaplectic ($n = 1$) forms on $\GL(2)$.   Very interestingly, for any fixed square free $t$, Shimura's work related the sequence of Fourier coefficients  $(a(tn^{2}))_{n = 1}^\infty$ of a half-integral weight form $f$ to the $L$-series of and integral weight form $F$, but left mysterious the significance, if any, of the coefficients $a(t)$ for square free $t$. In 1985, however, Waldspurger (see \cite{Waldspurger1985}) proved that up to a product of local constants $|a(t)|^2$ was equal to $L\left(\frac{1}{2},F \x \chi_{t}\right)$, the $L$-series of $F$ at the center of the critical strip, twisted by the quadratic character of conductor $t$. A generalization of this result to automorphic forms on the double cover of $\mathrm{GSp}(2r)$ has been achieved, but an extension to any other cover or group has remained a mystery.

  The point is, rather than being uninteresting by virtue of not having Euler products, $L$-series associated to automorphic forms on the double cover of $GL(2)$ have proved to have a profound relationship with non-metaplectic automorphic forms on $GL(2)$. The general consensus among the same influential voices is that these forms are interesting.

  Let's consider now the symmetric square lifting of a non-metaplectic automorphic form on $\GL(2)$ to $\GL(3)$. The $L$-series of this is a factor of the Rankin-Selberg convolution of this form with itself. The study of symmetric power liftings is at the heart of the Langlands program, and progress in this direction has, to a large extent, been made by converse theorems. Most notable here is the proof by Kim (see \cite{kim2003functoriality}) that the symmetric cube lifting from $\GL(2)$ to $\GL(4)$ is automorphic, which has led, as an immediate consequence, to the best progress so far toward the Ramanujan-Petersson Conjecture. One might ask: what is the analog of symmetric power liftings for metaplectic automorphic forms? The answer is that nothing whatsoever is known, but there are some tantalizing hints that the question is interesting.

  In the non-metaplectic case, the Rankin-Selberg convolution of a $\GL(2)$ form $f$ with itself is the product of the zeta function times the symmetric square $L$-series of $f$, which is also the $L$-series of an automorphic form on $\GL(4)$, namely an Eisenstein series formed by inducing the symmetric square lift of $f$ on $\GL(3)$, up to $\GL(4)$.

  In the case of the double cover of $\GL(2)$, the Rankin-Selberg convolution of a half-integral weight form with itself is an $L$-series which  does not factor into a symmetric square $L$-series times a zeta function, but which nevertheless has some very interesting properties. If any analog of the Langlands program were to carry through to symmetric power liftings, this $L$-series, although lacking an Euler product, should correspond to some automorphic form on some group and in fact it does. By Waldspurger's result, the Rankin-Selberg convolution of a half-integral weight form $f$ with itself is a Dirichlet series of roughly the form
  \[
    \sum_{d \ge 1}\frac{L\left(\frac{1}{2},F \x \chi_{d}\right)}{d^{w}},
  \]
  where up to an explicit finite correction polynomial, if $d = d_{0}d_{1}^{2})$, with $d_{0}$ square free, $L\left(\frac{1}{2},F \x \chi_{d}\right) = P(d)L\left(\frac{1}{2},F \x \chi_{d_{0}}\right)$. In fact, this Dirichlet series is a special value of the spin $L$-function of an Eisenstein series on the double cover of $\mathrm{GSp}(4)$ formed by inducing $F$ to this group. If this could be proved directly, via a converse theorem, it would provide a completely different proof of Waldspurger's result. The original proof involves theta liftings and representation theory.

  Converse theorems on $\GL(n)$, such as Cogdell and Piatetski-Shapiro's work in 1994 (see \cite{cogdell1994converse}), and the proofs of the automorphy of the symmetric square and cube liftings have been vital ingredients in the most significant progress made to date in the Langlands program, that is, the $n = 1$ case of the study of automorphic forms on $n$-fold covers of groups. The study of converse theorems for metaplectic forms has great potential and this paper can be viewed as an attempt to begin to fill this gap in the literature.

  The paper is structured as follows. In the introduction, we set our notation and give the statement of the main theorem. In the second section, we quote two lemmas that we need to use for the proof of the main theorem. In the third section, we prove the main theorem. In the appendicies, we derive the functional equation for the $L$-function of a half-integral weight holomorphic form twisted by an additive character and a variant of a result of Hecke used for classical converse theorems.

  Let $f(z)$ be a half-integral weight holomorphic form on $\G_{0}(4N)\backslash\H$ of weight $\l$ and character $\chi$. Under any $\g = \begin{psmallmatrix} a & b \\ c & d \end{psmallmatrix} \in \G_{0}(4N)$, $f(\g z)$ transforms as
  \[
    f(\g z) = \chi(d)\e_{d}^{-2\l}\legendre{c}{d}(cz+d)^{\l}f(z),
  \]
  where $\tlegendre{c}{d}$ is the modified Jacobi symbol as given in \cite{shimura1973modular} and $\e_{d} = 1,i$ according to whether $d \equiv 1,3 \tmod{4}$ respectively. The factor $\e_{d}^{-1}\tlegendre{c}{d}(cz+d)^{\frac{1}{2}}$ is called the theta multiplier. Setting $e(nz) = e^{2\pi inz}$, $f(z)$ admits a Fourier expansion of the form
  \[
    f(z)=\sum_{n \ge 1}f_{n}n^{\frac{\lambda-1}{2}}e(nz).
  \]
  For a rational $\a = \frac{a}{c}$, we define the complete additive twist of $f$, $\L_{f}(s,\a)$, by
  \[
    \L_{f}(s,\a) = \G_{\C}\left(s+\frac{\l-1}{2}\right)\sum_{n \ge 1}\frac{f_{n}e(n\a)}{n^{s}},
  \]
  where we define $\G_{\C}(s) = 2(2\pi)^{-s}\G(s)$. Furthermore, for any $c\in 4N\zz_{>0}$, and $a,\overline{a}\in\zz$ such that $a\overline{a}\equiv 1\pmod{c}$, the completion will satisfy the functional equation (see \cref{append:functional_equation_additive_twist})
  \[
    \Lambda_f\left(s,\frac{a}{c}\right)=i^\lambda\chi(\overline{a})\epsilon_{a}^{-2\lambda}\legendre{c}{a}c^{1-2s}\Lambda_f\left(1-s,-\frac{\overline{a}}{c}\right).
  \]
  These additively twisted functional equations are the main analytic ingredient in the following converse theorem:

  \begin{theorem}\label{thm:ConverseTheorem}
    Let $N \ge 1$, $\chi$ be a Dirichlet character modulo $4N$, $\{f_n\}_{n\geq 1}$ be a sequence of complex numbers, $\gamma(s)$ be a complex function, and $\w$ be a nonzero complex number. Given any $\alpha\in \qq$, define the complete additive twist $L$-series
    \[
    \Lambda_f(s,\alpha)=\gamma(s)\sum_{n\geq 1} \frac{f_n e(n\alpha)}{n^s}.
    \]
    Suppose the following properties are satisfied:
    \begin{enumerate}
      \item $\sum_{n\geq 1}f_n n^{-s}$ converges absolutely in the half-plane $\Re(s)>1$.
      \item $\gamma(s)=Q^s\prod_{j=1}^r \Gamma(\lambda_j s+\mu_j)$ with $Q,\lambda_j\in\rr_{>0}$ and $\mu_j\in\cc$ such that $\Re(\mu_j)>-\frac{1}{2}\lambda_j$ and $\sum_{j=1}^r\lambda_j=1$.
      \item For every $c\in 4N\zz_{>0}$ and for every pair of integers $a,\overline{a}\in\zz$ such that $a\overline{a}\equiv 1\pmod{c}$, the $L$-series $\Lambda_f\left(s,\frac{a}{c}\right)$ and $\Lambda_f\left(s,-\frac{\overline{a}}{c}\right)$ continue to entire functions of finite order and satisfy the functional equation
    \begin{equation}\label{equ:functional_equation_assumption}
      \L_{f}\left(s,\frac{a}{c}\right) = \w\chi(\conj{a})\e_{a}^{-\d}\legendre{c}{a}c^{1-2s}\L_{f}\left(1-s,-\frac{\conj{a}}{c}\right),
    \end{equation}
    for some fixed odd $\d \tmod{4}$.
    \end{enumerate}
    Then there exists a half-integer $\l$ with $2\l \equiv \d \tmod{4}$ such that if we set
    \[
      f(z) = \sum_{n\geq 1}f_nn^{\frac{\lambda-1}{2}}e(nz),
    \]
    then $f(z) \in \mc{S}_{\l}(\G_{0}(4N),\chi)$.
  \end{theorem}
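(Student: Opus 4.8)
The plan is to follow the strategy of Booker--Farmer--Lee \cite{booker2022extension}, adapting each step to the half-integral weight setting where the theta multiplier $\e_d^{-2\l}\tlegendre{c}{d}$ replaces the character $\chi(d)$ alone. First I would use the additively twisted functional equations to produce a candidate modular form: the hypotheses give entire continuation and functional equations for $\L_f(s,a/c)$, and by a Mellin inversion (a half-integral weight analogue of the Hecke-type converse lemma promised in the appendix) one converts each functional equation in \eqref{equ:functional_equation_assumption} into a transformation law for $f$ under the corresponding Atkin--Lehner-type or translation element. Concretely, the functional equation at $\alpha = a/c$ should encode precisely the transformation of $f$ under the matrix sending $\infty \mapsto -\conj a/c$; inverting the Mellin transform term by term (justified by condition (1) for absolute convergence and condition (2) to control the gamma factors and contour shifts, using the finite-order hypothesis and Phragmén--Lindelöf to move contours) yields
\[
  f\!\left(\g z\right) = (\text{automorphy factor})\, f(z)
\]
for a generating set of such $\g$. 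The weight $\l$ emerges because condition (2) forces $\g(s)$ to have the shape $\G_\C(s + \tfrac{\l-1}{2})$ up to the data that is pinned down by matching the $c^{1-2s}$ scaling and the $\e_a^{-\d}$ factor: the congruence $2\l \equiv \d \tmod 4$ is exactly what makes the theta multiplier $\e_a^{-2\l}$ agree with the prescribed $\e_a^{-\d}$.

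Next I would invoke the half-integral weight Petersson trace formula, which is the arithmetic heart of the argument just as in \cite{booker2022extension}. The point is that the transformation laws obtained above only directly give automorphy under the specific matrices $\begin{psmallmatrix} a & b \\ c & d\end{psmallmatrix}$ with $c \in 4N\Z_{>0}$ appearing in hypothesis (3); to conclude $f$ transforms correctly under all of $\G_0(4N)$ one needs to know these matrices, together with the translations $z \mapsto z+1$ (automatic from the Fourier expansion), generate $\G_0(4N)$ up to the relations respected by the multiplier system. Rather than a direct generators-and-relations argument, the Petersson formula lets one express the deviation of $f$ from being a genuine cusp form in terms of sums of Kloosterman sums (Salié sums in the half-integral weight case) weighted by Bessel functions, and the functional equation hypotheses make these sums collapse; equivalently, one tests $f$ against the Poincaré series basis of $\mc S_\l(\G_0(4N),\chi)$ and shows the difference is orthogonal to everything, hence zero. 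This also delivers holomorphy and the cuspidality (vanishing at all cusps, not just $\infty$), since the growth conditions from (1) and (2) bound $f$ appropriately near every cusp via the transformation laws.

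The main obstacle I expect is bookkeeping the half-integral weight multiplier system correctly throughout: the factor $\e_d^{-2\l}\tlegendre{c}{d}$ does not satisfy a simple multiplicativity in $d$, so when one composes the transformations extracted from several functional equations (to reach a general element of $\G_0(4N)$), the cocycle identity for the theta multiplier must be verified by hand, and this is where the precise congruence $2\l \equiv \d \tmod 4$ and the quadratic reciprocity properties of $\tlegendre{c}{a}$ (used in the appendix to derive the functional equation in the first place) are indispensable. A secondary technical point is that in half-integral weight the relevant exponential sums in the Petersson formula are Salié sums rather than Kloosterman sums, and one must use the corresponding trace formula (e.g.\ as in the work following Kohnen and Iwaniec); but since the excerpt tells us the half-integral weight Petersson trace formula is available as an input, I would quote it as a black box and focus the argument on matching multipliers and justifying the analytic continuation and contour shifts that convert functional equations into automorphy.
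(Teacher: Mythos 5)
Your proposal has the right ingredients but assembles them in an order that does not work, and your first step contains the critical gap. You propose to begin by Mellin-inverting each functional equation to extract a transformation law for $f$. That inversion only produces a value of $f$ on $\H$ (i.e.\ a sum $\sum_n f_n n^{\frac{\lambda-1}{2}}e^{-2\pi ny}e(na/c)$) if $\gamma(s)$ is literally of the form $CH^s\G_{\C}\left(s+\frac{\lambda-1}{2}\right)$, so that the inverse Mellin transform of each term is an exponential. Hypothesis (2) does not force this shape: $\gamma(s)$ is an arbitrary product of Gamma factors with $\sum_j\lambda_j=1$, and your remark that condition (2) "forces $\gamma(s)$ to have the shape $\G_\C(s+\frac{\lambda-1}{2})$" is false as stated — determining $\gamma(s)$ is a substantial theorem, not a matter of matching the $c^{1-2s}$ scaling. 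This is precisely why the Booker--Farmer--Lee route exists. The paper never derives transformation laws in the generic case: it runs the Petersson trace formula first, forms $K_n(s,f,\chi)$ by summing the spectral side against $\zeta^{(4N)}(2s)f_m m^{-s}$, continues the geometric side to $\Re(s)>\frac12$ using the functional equations, and reads off from the residue at $s=1$ that $f_n$ is a fixed linear combination of the $n$-th coefficients of an orthonormal basis of $\mathcal S_\lambda(\G_0(4N),\chi)$ — provided the archimedean factor $F_\lambda(1,1)$ is nonzero for some admissible $\lambda$. Only when $F_\lambda(1,1)=0$ for every $\lambda\ge\frac92$ with $2\lambda\equiv\delta\pmod 4$ does the paper pin down $\gamma(s)=CH^s\G_\C\left(s+\frac{\nu-1}{2}\right)$ (via Lemma 2.1 and an orthogonality argument against Chebyshev polynomials), show $H=1$ and $\w=i^\nu$, and then run the Hecke-type Mellin inversion you describe. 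Your proposal skips this dichotomy entirely and treats the explicit form of $\gamma(s)$ as given.

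Your second step also mislocates the role of the trace formula. It is not used to propagate automorphy from a generating set to all of $\G_0(4N)$, nor can one "test $f$ against the Poincaré series basis" and conclude orthogonality — forming a Petersson inner product with $f$ presupposes that $f$ is already automorphic, which is the thing to be proved. In the residue case the trace formula identifies the coefficients $f_n$ directly with those of a genuine cusp form, so no generation argument is needed at all; in the small-weight case the passage from the specific matrices $M=\begin{psmallmatrix} a & \frac{a\conj a-1}{c} \\ c & \conj a\end{psmallmatrix}$ to all of $\G_0(4N)$ is a direct generators statement (these together with the translation generate the group), with multiplier consistency supplied by $2\nu\equiv\delta\pmod 4$. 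To repair your argument you would need either to establish the explicit form of $\gamma(s)$ before any Mellin inversion, or to restructure the proof around the residue of $K_n(s,f,\chi)$ at $s=1$ as the paper does.
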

\section{Lemmas}
  The proof of \cref{thm:ConverseTheorem} uses the same lemmas as found in \cite{booker2022extension}. We restate the lemmas below without proof. Furthermore, a proof of the first lemma can be found in \cite{booker2022extension} while a proof of the second lemma can be found in \cite{hoffstein2021first}.

  \begin{lemma}\label{lemma:GammaFactor}
    Let $\gamma(s)$ be as described in \cref{thm:ConverseTheorem}, and suppose that $\gamma(s)$ has poles at all but finitely many nonpositive integers. Then $\gamma(s) = CP(s)H^s\G_{\cc}(s)$, where $C,H\in\rr_{>0}$ and $P(s)$ is a monic polynomial whose roots are distinct nonpositive integers.
  \end{lemma}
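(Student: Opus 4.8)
The plan is to recover the pole structure of $\gamma$ from the hypothesis by a counting argument, identify the parameters $\lambda_j$ and $\mu_j$ explicitly, and then recognise $\gamma$ as a Gauss--multiplication consolidation of its $\Gamma$-factors. Since $Q^s$ is entire and nowhere vanishing and $\Gamma$ has no zeros, the poles of $\gamma(s)$, counted with multiplicity, form the disjoint union over $j$ of the progressions $\{(-k-\mu_j)/\lambda_j:k\in\zz_{\ge0}\}$, with no cancellation. For $T>0$, the number of these poles lying in the strip $-T<\Re(s)\le 0$ is $\sum_{j=1}^r(\lambda_j T+O(1))=T+O(1)$, using $\sum_j\lambda_j=1$; on the other hand all but finitely many of the $T+O(1)$ nonpositive integers in this strip are poles by hypothesis, contributing at least $T-O(1)$ to the count. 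Subtracting, the combined contribution of poles that are not nonpositive integers together with all multiplicity in excess of one at the integer poles is $O(1)$ uniformly in $T$. Letting $T\to\infty$ this forces every $\mu_j$ to be real (a factor with $\mu_j\notin\rr$ would contribute $\lambda_j T+O(1)$ non-real poles to the strip), $\gamma$ to have only finitely many non-integer poles, and the total excess multiplicity to be finite; in particular there is $T_0$ such that every pole of $\gamma$ with real part $<-T_0$ is a simple pole at a nonpositive integer.

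Next I would pin down the parameters. Each progression $\{(-k-\mu_j)/\lambda_j\}$ has infinitely many terms to the left of $-T_0$, all of which must be integers; comparing two consecutive terms gives $1/\lambda_j=:m_j\in\zz_{>0}$, and then $\mu_j m_j=:\beta_j\in\zz$, so that $\lambda_j s+\mu_j=(s+\beta_j)/m_j$. The hypothesis $\Re(\mu_j)>-\tfrac12\lambda_j$ becomes $\beta_j>-\tfrac12$, i.e.\ $\beta_j\in\zz_{\ge0}$, so the $j$-th progression of poles is $\{-\beta_j-m_jk:k\ge0\}\subseteq\zz_{\le0}$. Because every sufficiently negative integer is a simple pole of $\gamma$, each such integer $n$ satisfies $n\equiv-\beta_j\pmod{m_j}$ for exactly one index $j$; as this congruence depends only on $n$ modulo $\mathrm{lcm}(m_1,\dots,m_r)$, the residue classes $-\beta_j\bmod m_j$ form an exact covering system of $\zz$. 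In particular they are pairwise disjoint, so no integer lies in two of the pole-progressions and every pole of $\gamma$ is simple.

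Finally I would consolidate. Set $F(s)=\prod_{j=1}^r\Gamma((s+\beta_j)/m_j)$, so $\gamma(s)=Q^sF(s)$, and let $\mathcal P\subseteq\zz_{\le0}$ be its (simple) pole set, which contains every sufficiently negative integer. Then $F(s)/\Gamma(s)$ is entire, since the simple poles of $\Gamma$ on $\zz_{\le0}$ absorb all poles of $F$; it is zero-free except for simple zeros on the finite set $\zz_{\le0}\setminus\mathcal P$; and it has order at most one by Stirling. Hadamard factorisation therefore gives $F(s)/\Gamma(s)=e^{as+b}P_0(s)$ with $P_0$ a polynomial whose roots are the distinct nonpositive integers of $\zz_{\le0}\setminus\mathcal P$. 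Reality and positivity of $F$ and $\Gamma$ on the far positive real axis make $a$ and $b$ real and the leading coefficient of $P_0$ positive, so after absorbing constants $F(s)=C_0H_0^s\Gamma(s)P(s)$ with $C_0,H_0\in\rr_{>0}$ and $P$ monic with distinct nonpositive integer roots. Using $\Gamma(s)=\tfrac12(2\pi)^s\G_{\cc}(s)$ gives $\gamma(s)=\tfrac{C_0}{2}P(s)(2\pi QH_0)^s\G_{\cc}(s)$, which is the asserted form with $C=C_0/2$ and $H=2\pi QH_0$.

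I expect the counting step to be the main obstacle: the error terms must be made genuinely uniform in $T$ so that the limiting conclusions---finitely many non-integer poles, and bounded total excess multiplicity---are legitimate, and the possibility of non-real $\mu_j$ must be excluded along the way. Everything after that is bookkeeping with the Gauss multiplication formula together with a standard Hadamard factorisation.
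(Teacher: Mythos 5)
Your argument is correct; note that the paper itself states this lemma without proof (deferring to \cite{booker2022extension}), and your proof follows essentially the same strategy as the one given there: pole counting in vertical strips using $\sum_{j}\lambda_j=1$ to force $\lambda_j=1/m_j$ and $\mu_j=\beta_j/m_j$ with $\beta_j\in\Z_{\ge 0}$, the exact-covering observation, and consolidation into a single $\Gamma$-factor. The only real difference is that you consolidate via Hadamard factorization of $F(s)/\Gamma(s)$ rather than via the Gauss multiplication formula; that step and the order-one bound (which requires the reflection formula near the negative real axis, where Stirling alone does not apply) are routine, as is making the $O(1)$ error terms uniform in $T$.
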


  \begin{lemma}\label{lemma:SumOfRamanujanSums}
    Let $r(n;q)$ be the Ramanujan sum. Then for $n,N>0$ and $\Re(s)>1$, we have 
    \[
    \sum_{\substack{q\geq 1\\ 4N\mid q}}\frac{r(n;q)}{q^{2s}}=\begin{cases}
    \frac{\sigma_{1-2s}(n;4N)}{\zeta^{(4N)}(2s)} & n\neq 0,\\
    (4N)^{1-2s}\prod_{p\mid 4N}(1-p\inv)\frac{\z(2s-1)}{\z^{(4N)}(2s)} & n=0,
    \end{cases}
    \]
    where if $\frac{4N}{\prod_{p\mid 4N}p}\mid n$,
    \[
    \sigma_s(n;4N)=\prod_{\substack{p\mid n\\ p\nmid 4N}}\frac{p^{(\ord_p(n)+1)s}-1}{p^s-1}\cdot \prod_{p\mid 4N}\left(\frac{(1-p^{s-1})p^{(\ord_p(n)+1)s}-(1-p\inv)p^{\ord_p(4N)s}}{p^s-1}\right),
    \]
    and otherwise, $\s_s(n;4N)=0$.
  \end{lemma}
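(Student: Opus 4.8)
The plan is to reduce the identity to a purely local computation at each prime. Although the arithmetic function $q \mapsto r(n;q)\,\mathbf{1}[4N \mid q]$ is not multiplicative, both the value $r(n;q)$ and the condition $4N\mid q$ are determined prime-by-prime: writing $q = \prod_p p^{k_p}$ one has $r(n;q) = \prod_p r(n;p^{k_p})$, and $4N \mid q$ if and only if $k_p \geq \ord_p(4N)$ for every $p$. Hence, throughout the region of absolute convergence (which contains $\Re(s) > 1$ in both cases, and $\Re(s) > \tfrac12$ when $n \neq 0$), the series factors as an Euler product of truncated local sums,
\[
\sum_{\substack{q \geq 1\\ 4N \mid q}}\frac{r(n;q)}{q^{2s}} \;=\; \prod_{p}\left(\sum_{k \geq \ord_p(4N)}\frac{r(n;p^k)}{p^{2ks}}\right),
\]
where $\ord_p(4N) = 0$ for $p \nmid 4N$. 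It then remains to evaluate each local factor using the standard prime-power values of the Ramanujan sum: $r(n;p^k) = \mu(p^k)$ when $p \nmid n$; and, when $p\mid n$ with $v = \ord_p(n)$, $r(n;p^k) = \varphi(p^k)$ for $1 \le k \le v$, $r(n;p^{v+1}) = -p^{v}$, and $r(n;p^{k}) = 0$ for $k \ge v+2$; together with $r(0;p^k) = \varphi(p^k)$.

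For the case $n = 0$ I would substitute $x = p^{1-2s}$, so that $p^{k}p^{-2ks} = x^k$ and $p^{k-1}p^{-2ks} = x^{k}/p$. Then the local factor at $p \nmid 4N$ telescopes to $1 + (1-p^{-1})\tfrac{x}{1-x} = \tfrac{1-p^{-2s}}{1-p^{1-2s}}$, and the local factor at $p \mid 4N$ is $(1-p^{-1})\sum_{k \ge \ord_p(4N)} x^{k} = (1-p^{-1})\tfrac{x^{\ord_p(4N)}}{1-x}$. Taking the product over all $p$ and recognizing $\prod_p\tfrac{1-p^{-2s}}{1-p^{1-2s}} = \tfrac{\zeta(2s-1)}{\zeta(2s)}$, $\prod_{p\mid 4N}x^{\ord_p(4N)} = (4N)^{1-2s}$, and $\zeta^{(4N)}(2s) = \zeta(2s)\prod_{p\mid 4N}(1-p^{-2s})$ reproduces $(4N)^{1-2s}\prod_{p\mid 4N}(1-p^{-1})\tfrac{\zeta(2s-1)}{\zeta^{(4N)}(2s)}$ exactly.

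For $n \neq 0$, the primes $p \nmid 4N$ behave as in the classical level-one identity $\sum_q r(n;q)q^{-2s} = \sigma_{1-2s}(n)/\zeta(2s)$: with $v = \ord_p(n)$, the local factor is $(1-p^{-2s})\sum_{j=0}^{v}p^{j(1-2s)}$, which equals $1-p^{-2s}$ when $p \nmid n$ and which accounts for $\zeta^{(4N)}(2s)^{-1}$ and for the $p \nmid 4N$ product in the definition of $\sigma_{1-2s}(n;4N)$. For $p \mid 4N$, set $a = \ord_p(4N)$ and $v = \ord_p(n)$. If $v \le a - 2$ for some such $p$, then $r(n;p^k) = 0$ for all $k \ge a$, the corresponding local factor vanishes, and both sides of the asserted identity are $0$; this is precisely the case $\tfrac{4N}{\prod_{p\mid 4N}p}\nmid n$. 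Otherwise $v \ge a-1$ for every $p\mid 4N$, and with $x = p^{1-2s}$ the local factor is $(1-p^{-1})\sum_{k=a}^{v}x^{k} - x^{v+1}/p$ (the sum being empty, and the factor just $-x^{v+1}/p$, in the degenerate case $a = v+1$); summing the finite geometric series and clearing denominators puts this in the form $\dfrac{(1-p^{-2s})\,p^{(v+1)(1-2s)} - (1-p^{-1})\,p^{a(1-2s)}}{p^{1-2s}-1}$, which is exactly the $p$-th factor in the definition of $\sigma_{1-2s}(n;4N)$. Reassembling the Euler product — with $\zeta^{(4N)}$ contributing nothing at the primes dividing $4N$ — gives $\sigma_{1-2s}(n;4N)/\zeta^{(4N)}(2s)$.

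The only point requiring genuine care is the local analysis at the primes $p \mid 4N$: identifying the vanishing range $\ord_p(n) \le \ord_p(4N) - 2$ with the stated support condition, handling the degenerate case $\ord_p(n) = \ord_p(4N)-1$ where the truncated geometric series collapses to a single term, and verifying that the resulting rational function of $p^{1-2s}$ coincides with the compact expression in the definition of $\sigma_s(n;4N)$. Everything else is routine bookkeeping with geometric series and Euler factors, valid for $\Re(s) > 1$ by absolute convergence (and extending by analytic continuation where relevant).
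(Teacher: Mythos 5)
Your proof is correct. Note that the paper itself gives no argument for this lemma: it is stated without proof and attributed to \cite{hoffstein2021first}, so there is no internal proof to compare against. Your route --- factoring the restricted Dirichlet series into local factors using the multiplicativity of $r(n;q)$ in $q$ together with the divisibility condition being imposed prime-by-prime, then evaluating each local sum from the prime-power values of the Ramanujan sum --- is the standard way to establish such an identity, and you carry it out completely. I checked the key points: the $n=0$ local factors do give $\frac{1-p^{-2s}}{1-p^{1-2s}}$ at $p\nmid 4N$ and $(1-p^{-1})\frac{p^{\ord_p(4N)(1-2s)}}{1-p^{1-2s}}$ at $p\mid 4N$, reassembling to the stated formula; for $n\neq 0$ your closed form $\frac{(1-p^{-2s})p^{(\ord_p(n)+1)(1-2s)}-(1-p^{-1})p^{\ord_p(4N)(1-2s)}}{p^{1-2s}-1}$ at $p\mid 4N$ is exactly the factor in the definition of $\sigma_{1-2s}(n;4N)$, it correctly absorbs the degenerate case $\ord_p(n)=\ord_p(4N)-1$, and the vanishing range $\ord_p(n)\le\ord_p(4N)-2$ matches precisely the failure of $\frac{4N}{\prod_{p\mid 4N}p}\mid n$, so both sides vanish together. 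One cosmetic remark: no analytic continuation is needed anywhere, since $|r(n;q)|\le (n,q)$ for $n\neq 0$ and $r(0;q)=\varphi(q)$ give absolute convergence in $\Re(s)>\frac12$ and $\Re(s)>1$ respectively, which covers the stated range.
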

\section{The Main Argument}
  The line of argument in proving \cref{thm:ConverseTheorem} can be reduced to a sequence of steps. First, we use the Petersson trace formula to relate the Dirichlet series $\sum_{n \ge 1}f_{n}n^{-s}$ to a sum of Rankin-Selberg convolutions $L(s,f \x \conj{g})$ where $g$ ranges over an orthonormal basis of eigenforms. Rewriting the geometric side of the trace formula using the functional equations in \cref{equ:functional_equation_assumption}, we will be able to meromorphicially continue the Rankin-Selberg convolutions to the region $\Re(s) > \frac{1}{2}$ with a possible pole at $s = 1$. If the weight $\l$ (with $2\l \equiv \d \tmod{4}$) is not too small, taking a residue at $s = 1$ will show that each $f_{n}$ is linear combination of the $n$-th Fourier coefficients of the eigenbasis and our result follows. In the case the weight is small, we reach an obstruction as there is no pole. In this case, we rely on a variant of Hecke's converse theorem in half-integral weight to show that the $f_{n}$ are the Fourier coefficients of a holomorphic form.

  Now for the initial setup. Fix a half-integer $\l$ and let $\mc{H}_{\l}(\G_{0}(4N),\chi)$ be an orthonormal basis for the space $\mc{S}_{\l}(\G_{0}(4N),\chi)$. For $g \in \mc{H}_{\l}(\G_{0}(4N),\chi)$, denote its Fourier expansion by
  \[
    g(z) = \sum_{n \ge 1}\rho_{g}(n)n^{\frac{\l-1}{2}}e(nz).
  \]
  For fixed $n,m \ge 1$, the (half-integral weight) Petersson trace formula for weight $\l$ is
  \[
    \frac{\G\left(\l-1\right)}{(4\pi)^{\l-1}}\sum_{g \in \mc{H}_{\l}(\G_{0}(4N),\chi)}\rho_{g}(n)\conj{\rho_{g}(m)} = \d_{n,m}+\sum_{\substack{c \ge 1 \\ 4N \mid c}}\frac{2\pi i^{-\l}}{c}J_{\l-1}\left(\frac{4\pi\sqrt{nm}}{c}\right)S_{\chi,\l}(m,n;c),
  \]
  where $S_{\chi,\l}(m,n;c)$ is the Salie sum with theta multiplier:
  \[
    S_{\chi,\l}(m,n;c) = \sum_{\substack{a \tmod{c} \\ (a,c) = 1}}\chi(a)\legendre{a}{c}\e_{c}^{2\l}e\left(\frac{am+\conj{a}n}{c}\right).
  \]  
  Multiply the spectral side by $\z^{(4N)}(2s)\frac{f_{m}}{m^{s}}$, sum over $m$, and define $K_{n}(s,f,\chi)$ to be the result:
  \begin{equation}\label{equ:K_function_spectral_side}
    \begin{aligned}
      K_{n}(s,f,\chi) &= \z^{(4N)}(2s)\sum_{m \ge 1}\frac{f_{m}}{m^{s}}\frac{\G\left(\l-1\right)}{(4\pi)^{\l-1}}\sum_{g \in \mc{H}_{\l}(\G_{0}(4N),\chi)}\rho_{g}(n)\conj{\rho_{g}(m)} \\
      &= \frac{\G\left(\l-1\right)}{(4\pi)^{\l-1}}\sum_{g \in \mc{H}_{\l}(\G_{0}(4N),\chi)}\rho_{g}(n)L(s,f \x \conj{g}),
    \end{aligned}
  \end{equation}
  where we set
  \[
    L(s,f \x \conj{g}) = \z^{(4N)}(2s)\sum_{m \ge 1}\frac{f_{m}\conj{\rho_{g}(m)}}{m^{s}}.
  \]
  As $\sum_{m \ge 1}\frac{f_{m}}{m^{s}}$ is holomorphic for $\Re(s) > 1$ by assumption, $\sum_{m \le X}|f_{m}|^{2} \ll_{\e} X^{1+\e}$ for some $\e > 0$. Since $L(s,g \x \conj{g})$ has a pole at $s = 1$ we have by Landau's theorem an analogous bound $\sum_{m \le X}|\rho_{g}(m)|^{2} \ll_{\e} X^{1+\e}$. By Cauchy-Schwarz, these averages estimates together imply $\sum_{m \le X}|f_{m}\conj{\rho_{g}(m)}| \ll_{\e} X^{1+\e}$ which further implies $L(s,f \x \conj{g})$ is absolutely convergent for $\Re(s) > 1$. In particular, $K_{n}(s,f,\chi)$ is holomorphic for $\Re(s) > 1$.
  
  It will be convienent to setup some notation for a certain integral involved throughout the proof of \cref{thm:ConverseTheorem}. For $\l \ge \frac{9}{2}$, $x > 0$, $\Re(s) \in \left(\frac{1}{2},\frac{\l-1}{2}\right)$, and $\s_{1} \in \left(\frac{1-\l}{2},-\Re(s)\right)$, set
  \[
    F_{\l}(s,x) = \frac{1}{2\pi i}\int_{\Re(u) = \s_{1}}\frac{\G_{\C}\left(u+\frac{\l-1}{2}\right)\g\left(1-s-u\right)}{\G_{\C}\left(-u+\frac{\l+1}{2}\right)\g\left(s+u\right)}x^{u}\,du.
  \]
  Our bounds for $\Re(s)$ and $\s_{1}$ ensure that the contour avoids the poles of $\G_{\C}$-factor and $\g$-factor in the numerator. Moreover, from Stirling's formula we have
  \begin{equation}\label{equ:gamma_ratio_estimate}
    \frac{\G_{\C}\left(u+\frac{\l-1}{2}\right)\g\left(1-s-u\right)}{\G_{\C}\left(-u+\frac{\l+1}{2}\right)\g\left(s+u\right)} \ll |u|^{-2\Re(s)}.
  \end{equation}
  Indeed, the ratio of the $\G_{\C}$-factors is at most $O(|u|^{-2\Re(u)})$ and the ratio of the $\g$-factors is at most $O(|u|^{-2\Re(s+u)})$ because $\sum_{j = 1}^{r}\l_{j} = 1$. So \cref{equ:gamma_ratio_estimate} implies
  \begin{equation}\label{equ:F_int_bound}
    F_{\l}(s,x) \ll x^{\s_{1}}\frac{1}{2\pi i}\int_{\Re(u) = \s_{1}}|u|^{-2\Re(s)}\,du \ll x^{\s_{1}}.
  \end{equation}
  This estimate shows $F_{\l}(s,x)$ is absolutely bounded for $\Re(s) > \frac{1}{2}$ and fixed $x$. By manipulating the geometric side of the trace formula, we can show that $K_{n}(s,f,\chi)$ admits meromorphic continuation to the same region.

  \begin{proposition}\label{prop:analytic_continuation_of_K_function}
    For $\l \ge \frac{9}{2}$ with $2\l \equiv \d \tmod{4}$, $K_{n}(s,f,\chi)$ admits meromorphic continuation to the region $\Re(s) > \frac{1}{2}$ with at most a simple pole at $s = 1$.
  \end{proposition}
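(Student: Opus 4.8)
The plan is to feed the half-integral weight Petersson trace formula into \eqref{equ:K_function_spectral_side}, open up the geometric (Salie sum) term by a Mellin--Barnes representation of the $J$-Bessel function, and use the additively twisted functional equations \eqref{equ:functional_equation_assumption} to rewrite what comes out as a finite combination of the kernels $F_{\l}$ together with one explicit factor of $\z(2s-1)$. Inserting the trace formula into the first line of \eqref{equ:K_function_spectral_side} already splits off the diagonal: for $\Re(s)>1$,
\[
  K_{n}(s,f,\chi)=\z^{(4N)}(2s)\frac{f_{n}}{n^{s}}+2\pi i^{-\l}\,\z^{(4N)}(2s)\sum_{\substack{c\ge 1\\ 4N\mid c}}\frac{1}{c}\sum_{m\ge 1}\frac{f_{m}}{m^{s}}J_{\l-1}\!\left(\frac{4\pi\sqrt{nm}}{c}\right)S_{\chi,\l}(m,n;c),
\]
and since $\z^{(4N)}(2s)f_{n}n^{-s}$ is holomorphic on $\Re(s)>\tfrac12$, everything reduces to continuing the geometric term $G_{n}(s)$.

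For $G_{n}(s)$ I would replace $J_{\l-1}(4\pi\sqrt{nm}/c)$ by its Mellin--Barnes integral on a line $\Re(u)=\s_{1}$ in the strip $1-\l<\s_{1}<\tfrac32$, interchange that integral with the sums over $m$, over $c$, and over the residues $a$ inside $S_{\chi,\l}(m,n;c)$, and evaluate the $m$-sum as $\g(s+\tfrac u2)^{-1}\L_{f}(s+\tfrac u2,\tfrac ac)$. Applying \eqref{equ:functional_equation_assumption} turns this into a constant times $c^{1-2s-u}\chi(\conj a)\e_{a}^{-\d}\tlegendre{c}{a}\,\L_{f}(1-s-\tfrac u2,-\tfrac{\conj a}{c})$. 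Since $\L_{f}$ is entire of finite order, the poles of its $\g$-factor cancel against zeros of its Dirichlet series, so the only poles of the $u$-integrand are at $\Re(u)\le 1-\l$; one may therefore move $\s_{1}$ down into $(1-\l,-2\Re(s))$ without crossing a pole, and in that range expand $\L_{f}(1-s-\tfrac u2,-\tfrac{\conj a}{c})=\g(1-s-\tfrac u2)\sum_{k\ge 1}f_{k}k^{s-1+u/2}e(-k\conj a/c)$. The hypothesis $\l\ge\tfrac92$ is exactly what makes the strip $\tfrac12<\Re(s)<\tfrac{\l-1}2$ in which this shift is legitimate overlap the half-plane in which the preceding interchanges converge, and the decay \eqref{equ:gamma_ratio_estimate} (a consequence of $\sum_{j}\l_{j}=1$) is what makes all of these manipulations absolutely convergent.

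It then remains to collapse the arithmetic. The sum over $a\tmod c$ becomes
\[
  \sum_{\substack{a\tmod c\\ (a,c)=1}}\chi(a)\chi(\conj a)\,\tlegendre{a}{c}\tlegendre{c}{a}\,\e_{a}^{-\d}\e_{c}^{2\l}\,e\!\left(\frac{(n-k)\conj a}{c}\right),
\]
in which $\chi(a)\chi(\conj a)=1$ and quadratic reciprocity for the modified Jacobi symbol pairs $\tlegendre{a}{c}$ against $\tlegendre{c}{a}$; the residual signs are absorbed by $\e_{a}^{-\d}\e_{c}^{2\l}$ precisely because $2\l\equiv\d\tmod 4$, leaving the Ramanujan sum $r(n-k;c)$ up to a constant. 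Gathering the powers of $c$ --- $c^{-1}$ from the trace formula, $c^{u}$ from the Bessel integral, and $c^{1-2s-u}$ from the functional equation --- the $c$-sum is $\sum_{4N\mid c}r(n-k;c)c^{-2s}$, which \cref{lemma:SumOfRamanujanSums} evaluates; crucially the $\z^{(4N)}(2s)$ in its denominator cancels the one carried along from \eqref{equ:K_function_spectral_side}. After the substitution $u\mapsto 2u$ and matching the $\G_{\C}$-, $(2\pi)$- and $(4\pi)$-factors against the definition of $F_{\l}$, this presents $G_{n}(s)$, up to a constant, as
\[
  f_{n}\,\z(2s-1)(4N)^{1-2s}\!\!\prod_{p\mid 4N}(1-p\inv)\,n^{s-1}F_{\l}(s,1)\;+\;\sum_{\substack{k\ge 1\\ k\ne n}}f_{k}\,\s_{1-2s}(n-k;4N)\,k^{s-1}F_{\l}\!\left(s,\tfrac kn\right).
\]

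Finally I would read off the analytic behaviour. By \eqref{equ:F_int_bound}, $F_{\l}(s,x)\ll x^{\s_{1}}$ for admissible $\s_{1}$, so choosing $\s_{1}$ close to $1-\l$ and using $\s_{1-2s}(n-k;4N)\ll_{\e}|n-k|^{\e}$ on compacts in $\Re(s)>\tfrac12$ together with the absolute convergence of $\sum_{k}|f_{k}|k^{-\sigma}$ for $\sigma>1$, the off-diagonal series over $k$ converges locally uniformly and is holomorphic on $\tfrac12<\Re(s)<\tfrac{\l-1}2$; as $\tfrac{\l-1}2>1$ for $\l\ge\tfrac92$, this strip meets $\Re(s)>1$, where $K_{n}(s,f,\chi)$ already converges, so the two expressions agree there and $K_{n}(s,f,\chi)$ continues to $\Re(s)>\tfrac12$. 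The factors $\s_{1-2s}(n-k;4N)$ are entire, so the only non-holomorphic ingredient is $\z(2s-1)$, whose simple pole at $s=1$ is the asserted pole. I expect the real obstacle to be the collapse of the twisted Salie sum: checking that the theta multiplier, the $\e_{a},\e_{c}$ factors produced by the functional equation, and the congruence $2\l\equiv\d\tmod 4$ conspire to leave a clean Ramanujan sum is the authentically half-integral weight step, with the bookkeeping of all archimedean constants needed to recover $F_{\l}$ exactly --- and the justification of the successive contour shifts and interchanges of summation and integration --- a close second.
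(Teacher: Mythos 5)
Your proposal is correct and follows essentially the same route as the paper's proof: insert the trace formula, open the Bessel function via Mellin--Barnes, shift the contour past $-\Re(s)$, apply the additive-twist functional equation so the $a$-sum collapses to a Ramanujan sum, evaluate the $c$-sum by \cref{lemma:SumOfRamanujanSums}, and read off the continuation from the bound \eqref{equ:F_int_bound} on $F_{\l}$ together with the pole of $\z(2s-1)$. The only differences are cosmetic (the order of the contour shift versus the application of the functional equation, and your more explicit remark that the strip $\frac12<\Re(s)<\frac{\l-1}{2}$ overlaps $\Re(s)>1$ to glue the two expressions).
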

  \begin{proof}
    Restrict to $\Re(s) > \frac{5}{4}$. Looking at the geometric side of the Petersson trace formula,
    \[
      K_{n}(s,f,\chi) = \z^{(4N)}(2s)\frac{f_{n}}{n^{s}}+\z^{(4N)}(2s)\sum_{m \ge 1}\frac{f_{m}}{m^{s}}\sum_{\substack{c \ge 1 \\ 4N \mid c}}\frac{2\pi i^{-\l}}{c}J_{\l-1}\left(\frac{4\pi\sqrt{nm}}{c}\right)S_{\chi,\l}(m,n;c).
    \]
    Recall the well-known estimates
    \[
      J_{\l-1}(y) \ll \min\left\{y^{\l-1},y^{-\frac{1}{2}}\right\} \quad \text{and} \quad S_{\chi, \l}(m,n;c) \ll_{m,n,\e}c^{\frac{1}{2}+\e}.
    \]
    Breaking the sum over $m$ and $c$ according to which value dominates, we have
    \[
      \sum_{m \ll c}\frac{f_{m}}{m^{s}c}J_{\l-1}\left(\frac{4\pi\sqrt{nm}}{c}\right)S_{\chi,\l}(m,n;c)+\sum_{m \gg c}\frac{f_{m}}{m^{s}c}J_{\l-1}\left(\frac{4\pi\sqrt{nm}}{c}\right)S_{\chi,\l}(m,n;c).
    \]
    Applying the $J$-Bessel bound $\ll y^{\l-1}$ to the first sum and $\ll y^{-\frac{1}{2}}$ to the second (along with the Weil bound for both sums) the expression above is no larger than $O\left(\sum_{m \ge 1}f_{m}m^{\frac{1}{4}-s}\right)$ which is absolutely convergent for $\Re(s) > \frac{5}{4}$. Therefore we may interchange the sums resulting in
    \[
      K_{n}(s,f,\chi) = \z^{(4N)}(2s)\frac{f_{n}}{n^{s}}+\z^{(4N)}(2s)2\pi i^{-\l}\sum_{\substack{c \ge 1 \\ 4N \mid c}}\frac{1}{c}\sum_{m \ge 1}\frac{f_{m}S_{\chi,\l}(m,n;c)}{m^{s}}J_{\l-1}\left(\frac{4\pi\sqrt{nm}}{c}\right).
    \]
    Now restrict to the region $\Re(s) \in \left(\frac{5}{4},\frac{\l-1}{2}\right)$. Recall the Mellin-Barnes integral repersentation for the $J$-Bessel function which is valid for $\s_{0} \in \left(\frac{1-\l}{2},0\right)$:
    \[
      J_{\l-1}(4\pi y) = \frac{1}{4\pi^{2}i}\int_{\Re(u) = \s_{0}}\frac{\G_{\C}\left(u+\frac{\l-1}{2}\right)}{\G_{\C}\left(-u+\frac{\l+1}{2}\right)}y^{-2u}\,du.
    \]
    In particular, for $\s_{0} \in (1-\Re(s),0)$ we can interchange the integral and sum over $m$ to obtain
    \begin{equation}\label{equ:intermediate_sum_for_c}
      \z^{(4N)}(2s)\frac{f_{n}}{n^{s}}+\z^{(4N)}(2s)i^{-\l}\sum_{\substack{c \ge 1 \\ 4N \mid c}}\frac{1}{2\pi i}\int_{\Re(u) = \s_{0}}\frac{\G_{\C}\left(u+\frac{\l-1}{2}\right)}{\G_{\C}\left(-u+\frac{\l+1}{2}\right)}c^{2u-1}\sum_{m \ge 1}\frac{f_{m}S_{\chi,\l}(m,n;c)}{m^{s}}(nm)^{-u}\,du.
    \end{equation}
    Opening up the Salie sum and interchanging it with the sum over
    $m$, the outside sum over $c$ is
    \[
      \sum_{\substack{c \ge 1 \\ 4N \mid c}}\frac{1}{2\pi i}\int_{\Re(u) = \s_{0}}\frac{\G_{\C}\left(u+\frac{\l-1}{2}\right)}{\G_{\C}\left(-u+\frac{\l+1}{2}\right)}c^{2u-1}n^{-u}\sum_{\substack{a \tmod{c} \\ (a,c) = 1}}\chi(a)\e_{a}^{2\l}\legendre{c}{a}e\left(n\frac{\conj{a}}{c}\right)L_{f}\left(s+u,\frac{a}{c}\right)\,du,
    \]
    where
    \[
      L_{f}\left(s,\frac{a}{c}\right) = \frac{\L_{f}\left(s,\frac{a}{c}\right)}{\g(s)} = \sum_{m \ge 1}\frac{f_{m}e\left(m\frac{a}{c}\right)}{m^{s}}.
    \]
    The next step is to apply the functional equation. First, shift the contour to $\s_{1} \in (-\frac{\l-1}{2},-\Re(s))$. We do not pick up residues since the first pole occurs at $u = -\frac{\l-1}{2}$ from $\G_{\C}\left(u+\frac{\l-1}{2}\right)$. Upon substiuting \cref{equ:functional_equation_assumption}, our assumption that $2\l \equiv \d \tmod{4}$ yields
    \begin{equation}\label{equ:c_sum_after_fun_eq}
      \begin{aligned}
        \w\sum_{\substack{c \ge 1 \\ 4N \mid c}}\frac{1}{c^{2s}}\frac{1}{2\pi i}&\int_{\Re(u) = \s_{1}}\frac{\G_{\C}\left(u+\frac{\l-1}{2}\right)\g(1-s-u)}{\G_{\C}\left(-u+\frac{\l+1}{2}\right)\g(s+u)}n^{-u} \\
        &\cdot \sum_{\substack{a \tmod{c} \\ (a,c) = 1}}e\left(n\frac{\conj{a}}{c}\right)L_{f}\left(1-s-u,-\frac{\conj{a}}{c}\right)\,du.
      \end{aligned}
    \end{equation}
    Now $\Re(1-s-u) > 1$ so that $L\left(1-s-u,-\frac{\conj{a}}{c}\right) \ll 1$. Expanding the $L$-series as a Dirichlet series and interchanging with the sum over $a$, we have
    \begin{equation}\label{equ:L-series_Ramanujan}
      \sum_{\substack{a \tmod{c} \\ (a,c) = 1}}e\left(n\frac{\conj{a}}{c}\right)L_{f}\left(1-s-u,-\frac{\conj{a}}{c}\right) = \sum_{m \ge 1}\frac{f_{m}r(n-m;c)}{m^{1-s-u}}.
    \end{equation}
    where $r(n;c)$ is the Ramanujan sum. Inserting \cref{equ:L-series_Ramanujan} into \cref{equ:c_sum_after_fun_eq} and replacing the result with the sum over $c$ in \cref{equ:intermediate_sum_for_c} yields
    \begin{align*}
      K_{n}(s,f,\chi) &= \z^{(4N)}(2s)\frac{f_{n}}{n^{s}}+\z^{(4N)}(2s) \\
      &\cdot \w i^{-\l}\sum_{\substack{c \ge 1 \\ 4N \mid c}}\frac{1}{c^{2s}}\frac{1}{2\pi i}\int_{\Re(u) = \s_{1}}\frac{\G_{\C}\left(u+\frac{\l-1}{2}\right)\g\left(1-s-u\right)}{\G_{\C}\left(-u+\frac{\l+1}{2}\right)\g\left(s+u\right)}n^{-u}\sum_{m \ge 1}\frac{f_{m}r(n-m;c)}{m^{1-s-u}}\,du.
    \end{align*}
    Since $L\left(1-s-u,-\frac{\conj{a}}{c}\right) \ll 1$, \cref{equ:gamma_ratio_estimate,equ:L-series_Ramanujan} imply that the integrand is absolutely bounded so we may interchange the integral and sum over $m$ to obtain
    \[
      K_{n}(s,f,\chi) = \z^{(4N)}(2s)\frac{f_{n}}{n^{s}}+\z^{(4N)}(2s)\w i^{-\l}\sum_{\substack{c \ge 1 \\ 4N \mid c}}\frac{1}{c^{2s}}\sum_{m \ge 1}\frac{f_{m}r(n-m;c)}{m^{1-s}} F_{\l}\left(s,\frac{m}{n}\right).
    \]
    In particular, $F_{\l}\left(s,\frac{m}{n}\right) \ll_{n} m^{\s_{1}}$ by \cref{equ:F_int_bound}, so \cref{equ:L-series_Ramanujan} implies the sum over $m$ is absolutely bounded. Therefore the sum over $c$ is absolutely bounded too. Interchanging the sums over $c$ and $m$ yields
    \[
      K_{n}(s,f,\chi) = \z^{(4N)}(2s)\frac{f_{n}}{n^{s}}+\z^{(4N)}(2s)\w i^{-\l}\sum_{m \ge 1}\frac{f_{m}}{m^{1-s}}\sum_{\substack{c \ge 1 \\ 4N \mid c}}\frac{r(n-m;c)}{c^{2s}} F_{\l}\left(s,\frac{m}{n}\right).
    \]
    Using \cref{lemma:SumOfRamanujanSums}, we can compute the sum over $c$ to obtain
    \begin{equation}\label{equ:K_geometric_side_analytic_continuation}
      \begin{aligned}
        K_{n}(s,f,\chi) = \z^{(4N)}(2s)\frac{f_{n}}{n^{s}}&+\w i^{-\l}\frac{f_{n}}{n^{1-s}}\z(2s-1)(4N)^{1-2s}\prod_{p \mid 4N}(1-p^{-1})F_{\l}(s,1) \\
        &+\w i^{-\l}\sum_{\substack{m \ge 1 \\ m \neq n}}\frac{f_{m}\s_{1-2s}(n-m;4N)}{m^{1-s}}F_{\l}\left(s,\frac{m}{n}\right).
      \end{aligned}
    \end{equation}
    We now argue that \cref{equ:K_geometric_side_analytic_continuation} gives meromorphic continuation to the region $\frac{1}{2} < \Re(s) < -\s_{1}$. Indeed, we have already remarked that $F_{\l}(s,1)$ is absolutely bounded. As for the sum over $m$, we use the estimates $F_{\l}\left(s,\frac{m}{n}\right) \ll_{n} m^{\s_{1}}$ by \cref{equ:F_int_bound} and $\s_{1-2s}(n-m;N) \ll_{N,n,\e} m^{\e}$ for all $m \neq n$ so that
    \[
      \sum_{\substack{m \ge 1 \\ m \neq n}}\frac{f_{m}\s_{1-2s}(n-m;N)}{m^{1-s}}F_{\l}\left(s,\frac{m}{n}\right) \ll_{n} \sum_{\substack{m \ge 1 \\ m \neq n}}\frac{f_{m}\s_{1-2s}(n-m;N)m^{\s_{1}}}{m^{1-s}} \ll_{N,n,\e} \sum_{\substack{m \ge 1 \\ m \neq n}}\frac{f_{m}}{m^{1-s-\e-\s_{1}}},
    \]
    and this latter sum converges absolutely for $\Re(s) < -\e-\s_{1}$. Therefore all the terms in the right-hand side of \cref{equ:K_geometric_side_analytic_continuation} are meromorphic for $\frac{1}{2} < \Re(s) < -\s_{1}$ with a possible simple pole at $s = 1$ coming from $\z(2s-1)$ in the middle term. The meromorphic continuation of $K_{n}(s,f,\chi)$ is now established.
  \end{proof}

  The second result is that if $F_{\l}(1,1)$ vanishes for all sufficiently large $\l$ with $2\l \equiv \d \tmod{4}$, then $\g(s)$ can be specified explicitely.
  \begin{proposition}\label{prop:specify_gamma_factor_if_integral_vanishes}
    Let $\g(s)$ be given as in \cref{thm:ConverseTheorem} and suppose there is a residue class $\d \tmod{4}$ such that $F_{\l}(1,1) = 0$ for all $\l \ge \frac{9}{2}$ with $2\l \equiv \d \tmod{4}$. Then there exists $\nu \in \left\{\frac{1}{2},\frac{3}{2},\frac{5}{2},\frac{7}{2}\right\}$ such that $2\nu \equiv \d \tmod{4}$ with $\g(s) = CH^{s}\G_{\C}\left(s+\frac{\nu-1}{2}\right)$.
  \end{proposition}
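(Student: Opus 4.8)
The plan is to evaluate $F_{\l}(1,1)$ by shifting the contour in its defining integral and then reading off the consequences of its vanishing along the progression. A Stirling estimate of the kind behind \cref{equ:gamma_ratio_estimate} shows that on the line $\Re(u) = -M$ the integrand
\[
  \frac{\G_{\C}\!\left(u+\frac{\l-1}{2}\right)\g(-u)}{\G_{\C}\!\left(-u+\frac{\l+1}{2}\right)\g(1+u)}
\]
has size $\ll M^{-2}\bigl(2\pi Q\prod_{j}\l_{j}^{\l_{j}}\bigr)^{2M}$; hence when $2\pi Q\prod_{j}\l_{j}^{\l_{j}} \le 1$ one may push the contour to $\Re(u) \to -\infty$, crossing only the poles of $\G_{\C}\!\left(u+\frac{\l-1}{2}\right)$ at $u = \frac{1-\l}{2}-k$ (with residue $\frac{2(-2\pi)^{k}}{k!}$), and obtain the closed expansion
\[
  F_{\l}(1,1) = \sum_{k \ge 0}\frac{2(-2\pi)^{k}}{k!}\cdot\frac{\g\!\left(\frac{\l-1}{2}+k\right)}{\G_{\C}(\l+k)\,\g\!\left(\frac{3-\l}{2}-k\right)};
\]
in the complementary regime one shifts right, across the poles of $\g(-u)$, to the analogous (and equal) expansion. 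Because $\g(s) = Q^{s}\prod_{j}\G(\l_{j}s+\mu_{j})$ is zero-free, the numerators never vanish, so the $k$th term vanishes precisely when $\g$ has a pole at $\frac{3-\l}{2}-k$.

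The core of the argument is to deduce from the hypothesis that $\g$ has poles at all but finitely many points of the coset $\frac{3-\l_{0}}{2}+\Z_{\le 0}$, where $\l_{0}$ denotes the least half-integer $\ge \tfrac{9}{2}$ in the progression ($\l_{0} = \tfrac{9}{2}$ when $\d \equiv 1 \tmod{4}$, $\l_{0} = \tfrac{11}{2}$ when $\d \equiv 3 \tmod{4}$). When $r = 1$ — so that $\l_{1} = 1$ and $\g(s) = Q^{s}\G(s+\mu_{1})$ — Euler's reflection formula applied to each term of the sum collapses the expansion to
\[
  F_{\l}(1,1) = \frac{4\pi^{2}H^{\l-2}\,\G\!\left(\frac{\l-1}{2}+\mu_{1}\right)}{\G\!\left(\frac{3-\l}{2}+\mu_{1}\right)\G(\l)}\;{}_{2}F_{1}\!\left(\tfrac{\l-1}{2}+\mu_{1},\tfrac{\l-1}{2}-\mu_{1};\l;H^{2}\right),
\]
with $H = 2\pi Q$; the series converges for $H \le 1$ and is defined by continuation otherwise. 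For every sufficiently large $\l$ in the progression the factors $\G\!\left(\frac{\l-1}{2}+\mu_{1}\right)$, $\G(\l)^{-1}$ and the hypergeometric series (whose terms are then positive) are finite and nonzero, so $F_{\l}(1,1) = 0$ forces $\frac{3-\l}{2}+\mu_{1} \in \Z_{\le 0}$, i.e.\ $\g$ has a pole at $\frac{3-\l}{2}$; letting $\l$ range over the progression gives the asserted poles of $\g$, and imposing in addition the vanishing at the small values $\l = \l_{0},\l_{0}+2,\dots$ (where the hypergeometric factor degenerates to a nonzero polynomial while $\G\!\left(\frac{3-\l}{2}+\mu_{1}\right)^{-1}$ remains nonzero) forces $\mu_{1} \le \tfrac{\l_{0}-3}{2}$; together with $\mu_{1} > -\tfrac{1}{2}$ this leaves exactly $\mu_{1} \in \{\tfrac{\l_{0}-5}{2},\tfrac{\l_{0}-3}{2}\}$. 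The case $r \ge 2$ — where every $\l_{j} < 1$ — is the step I expect to be the main obstacle: reflection now introduces products $\prod_{j}\sin\!\bigl(\pi(\l_{j}(\tfrac{\l-3}{2}+k)-\mu_{j})\bigr)$ whose phases in $k$ are mutually incommensurate, and one must argue, via an asymptotic analysis of the residue series as $\l \to \infty$, that the nonvanishing terms cannot cancel for all $\l$ in the progression, whence again each term must vanish and $\g$ acquires poles along a full coset.

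To conclude, once $\g$ is known to have poles at all but finitely many points of a coset $c+\Z_{\le 0}$, I would apply \cref{lemma:GammaFactor} to the translate $\g(s-c)$ — which, up to a positive constant, is again of the form in \cref{thm:ConverseTheorem} and has poles at all but finitely many nonpositive integers — obtaining $\g(s-c) = CP(s)H^{s}\G_{\C}(s)$. But $\g$ is $Q^{s}$ times a product of Gamma functions, hence has no zeros, so $P$ has no roots, $P \equiv 1$, and in particular $r = 1$; thus $\g(s) = CH^{s}\G_{\C}(s-c)$. Writing $\nu = 1-2c$ (so that $\tfrac{\nu-1}{2} = -c = \mu_{1}$), the two surviving values give $\nu = \l_{0}-2$ or $\nu = \l_{0}-4$, both in $\{\tfrac{1}{2},\tfrac{3}{2},\tfrac{5}{2},\tfrac{7}{2}\}$ and satisfying $2\nu \equiv 2\l_{0} \equiv \d \tmod{4}$, which is precisely the claimed normalization $\g(s) = CH^{s}\G_{\C}\!\left(s+\tfrac{\nu-1}{2}\right)$.
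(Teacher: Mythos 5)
Your proposal has a genuine gap at exactly the point you flag yourself: the case $r \ge 2$. Everything in your argument that actually pins down the poles of $\g$ runs through the closed-form evaluation of the residue series, which you only carry out when $\g(s) = Q^{s}\G(s+\mu_{1})$ (via the ${}_{2}F_{1}$ identity and positivity of its terms). For general $\g$ with several $\G$-factors of parameters $\l_{j} < 1$, the residues involve values $\g\!\left(\frac{3-\l}{2}-k\right)$ whose reflection produces incommensurate oscillatory factors, and "the nonvanishing terms cannot cancel for all $\l$ in the progression" is precisely the statement that needs proof — it is not a technical verification but the core difficulty, since the hypothesis of the proposition places no a priori bound on $r$. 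As written, the proposal proves the proposition only under the additional assumption $r=1$, which is not available. (There is also a soft spot in the direction-of-shift dichotomy: when $2\pi Q\prod_{j}\l_{j}^{\l_{j}} > 1$ the rightward shift crosses the poles of $\g(-u)$, which lie on $r$ arithmetic progressions of spacing $1/\l_{j}$, and the claim that the resulting expansion is "analogous and equal" to the leftward one is not justified.)

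The paper avoids this case analysis entirely. After the substitution $u \to u/2$ it recognizes the $\l$-dependent part of the integrand as the Mellin transform $\wtilde{f_{\l-1}}(1-u)$ of the Chebyshev-type kernel $f_{\l}(y) = \chi_{(0,1)}(y)(1-y^{2})^{-1/2}\cos(\l\arccos y)$, and packages everything depending on $\g$ into a single function $\wtilde{g}(u)$ independent of $\l$. Mellin--Parseval converts the hypothesis $F_{\l}(1,1)=0$ into the orthogonality of a fixed continuous function $h$ against the Chebyshev polynomials of the third kind $V_{n}$ along an arithmetic progression of degrees; completeness of that family forces $h$ to be a low-degree polynomial on $[0,1]$, and the decay $g(y) = O(y^{5/2})$ kills it, so $g \equiv 0$ on $[0,1]$. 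The analytic continuation of $\wtilde{g}(u) = \int_{1}^{\infty}g(y)y^{u-1}\,dy$ to $\Re(u) < -\frac{5}{2}$ then forces $\g$ to have poles along the required coset, uniformly in $r$ — this is the step your approach is missing.

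One further correction to your concluding paragraph: you cannot deduce $P \equiv 1$ from the fact that $\g$ is zero-free. In \cref{lemma:GammaFactor} the roots of $P$ are nonpositive integers, i.e.\ simple poles of $\G_{\C}(s)$, so $P(s)\G_{\C}(s)$ is finite and nonzero at those points; a nontrivial $P$ does not create zeros of $\g$. Indeed the paper's \cref{tab:table_for_proof_of_lemma} produces $P(s) = s$, $s(s+1)$, etc., and it is exactly these nontrivial $P$ that, via $P(s)\G_{\C}(s) = \text{const}\cdot\G_{\C}(s+\deg P)$, generate the two admissible values of $\nu$ in each residue class. Your route happens to recover two values of $\mu_{1}$ directly in the $r=1$ case, but the inference "$\g$ zero-free $\Rightarrow P \equiv 1 \Rightarrow r=1$" is not valid and cannot be used to dispose of the $r \ge 2$ case.
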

  \begin{proof}
    Make the change of variables $u \to \frac{u}{2}$ in $F_{\l}(1,1)$ and shift the contour to $\Re(u) = -\frac{5}{2}$ to obtain
    \[
      F_{\l}(1,1) = \frac{1}{2\pi i}\int_{\Re(u) = -\frac{5}{2}}\frac{\G_{\C}\left(\frac{\l-1+u}{2}\right)\g\left(-\frac{u}{2}\right)}{2\G_{\C}\left(\frac{\l+1-u}{2}\right)\g\left(1+\frac{u}{2}\right)}\,du = 0,
    \]
    for all $\l \ge \frac{9}{2}$ with $2\l \equiv \d \tmod{4}$. Now consider the function
    \[
      f_{\l}(y) = \frac{\chi_{(0,1)}(y)}{\sqrt{1-y^{2}}}\cos(\l\arccos(y)).
    \]
    Upon changing variables $y \to \cos(y)$, the Mellin transform of $f_{\l}(y)$ is (see \cite{zwillinger2007table} \S3.631 \#9):
    \[
      \wtilde{f_{\l}}(s) = \int_{0}^{\infty}f_{\l}(y)y^{s-1}\,dy = \frac{\G_{\C}(s)}{2^{s}\G_{\C}\left(\frac{s+\l+1}{2}\right)\G_{\C}\left(\frac{s-\l+1}{2}\right)},
    \]
    for $\Re(s) > 0$. Let $\conj{\nu} \in \left\{\frac{1}{2},\frac{3}{2},\frac{5}{2},\frac{7}{2}\right\}$ be such that $2\conj{\nu}+2\l \equiv 4 \tmod{8}$. Then, making use of Euler's reflection formula, we have
    \begin{align*}
      \frac{\G_{\C}\left(\frac{\l-1+u}{2}\right)\g\left(-\frac{u}{2}\right)}{2\G_{\C}\left(\frac{\l+1-u}{2}\right)\g\left(1+\frac{u}{2}\right)} &= \frac{\G_{\C}(1-u)}{2^{1-u}\G_{\C}\left(\frac{\l+1-u}{2}\right)\G_{\C}\left(\frac{3-\l-u}{2}\right)}\frac{2^{-u}\G_{\C}\left(\frac{3-\l-u}{2}\right)\G_{\C}\left(\frac{\l-1+u}{2}\right)}{\G_{\C}(1-u)}\frac{\g\left(-\frac{u}{2}\right)}{\g\left(1+\frac{u}{2}\right)} \\
      &= \wtilde{f_{\l-1}}(1-u)\frac{2^{-u}\G_{\C}\left(\frac{3-\l-u}{2}\right)\G_{\C}\left(\frac{\l-1+u}{2}\right)}{\G_{\C}(1-u)}\frac{\g\left(-\frac{u}{2}\right)}{\g\left(1+\frac{u}{2}\right)} \\
      &= \wtilde{f_{\l-1}}(1-u)\frac{2^{1-u}}{\sin\left(\frac{\pi}{2}(\l-1+u)\right)\G_{\C}(1-u)}\frac{\g\left(-\frac{u}{2}\right)}{\g\left(1+\frac{u}{2}\right)} \\
      &= \wtilde{f_{\l-1}}(1-u)\frac{2^{1-u}}{\sin\left(\frac{\pi}{2}(u+1-\conj{\nu})\right)\G_{\C}(1-u)}\frac{\g\left(-\frac{u}{2}\right)}{\g\left(1+\frac{u}{2}\right)} \\
      &= \wtilde{f_{\l-1}}(1-u)\frac{2^{-u}\G_{\C}\left(\frac{u+1-\conj{\nu}}{2}\right)\G_{\C}\left(\frac{1-u+\conj{\nu}}{2}\right)}{\G_{\C}(1-u)}\frac{\g\left(-\frac{u}{2}\right)}{\g\left(1+\frac{u}{2}\right)},
    \end{align*}
    where the fourth equality follows because $2\conj{\nu}+2\l \equiv 4 \tmod{8}$. Set
    \[
      \wtilde{g}(u) = \frac{2^{-u}\G_{\C}\left(\frac{u+1-\conj{\nu}}{2}\right)\G_{\C}\left(\frac{1-u+\conj{\nu}}{2}\right)}{\G_{\C}(1-u)}\frac{\g\left(-\frac{u}{2}\right)}{\g\left(1+\frac{u}{2}\right)},
    \]
    so that we can express $F_{\l}(1,1)$ as
    \[
      F_{\l}(1,1) = \frac{1}{2\pi i}\int_{\Re(u) = -\frac{5}{2}}\wtilde{f_{\l-1}}(1-u)\wtilde{g}(u)\,du = 0.
    \]
    Now $\wtilde{g}(u)$ is holomorphic in a strip containing $\Re(u) = -\frac{5}{2}$ since at $u = -\frac{5}{2}$ the gamma factors $\G_{\C}\left(\frac{u+1-\conj{\nu}}{2}\right)$, $\G_{\C}\left(\frac{1-u+\conj{\nu}}{2}\right)$, and $\g\left(-\frac{u}{2}\right)$ are all holomorphic (for all $\conj{\nu}$). Moreover, by Stirling's formula we have that $\wtilde{g}(u) \ll |u|^{-\frac{3}{2}}$ and so
    \begin{equation}\label{equ:Mellin_transform_bound_for_g}
      \int_{\Re(u) = -\frac{5}{2}}\wtilde{g}(u)y^{-u}\,du \ll y^{\frac{5}{2}}\int_{\Re(u) = -\frac{5}{2}}|u|^{-\frac{3}{2}}\,du \ll y^{\frac{5}{2}}.
    \end{equation}
    Therefore the Mellin inverse of $\wtilde{g}(u)$ exsits. In particular,
    \begin{equation}\label{equ:g_as_Mellin_transform}
      g(y) = \frac{1}{2\pi i}\int_{\Re(u) = -\frac{5}{2}}\wtilde{g}(u)y^{-u}\,du,
    \end{equation}
    is a continuous function on $[0,\infty)$. Our goal now is to show that $g$ is identically zero on $[0,1]$. By Fubini's theorem,
    \begin{equation}\label{equ:int_of_f_and_g_is_zero}
      0 = \frac{1}{2\pi i}\int_{\Re(u) = -\frac{5}{2}}\wtilde{f_{\l-1}}(1-u)\wtilde{g}(u)\,du = \int_{0}^{\infty}f_{\l-1}(y)g(y)\,dy.
    \end{equation}
    Making the change of variables $y = \cos(\t)$ and noting that $\cos(\t) = |\cos(\t)|$ for $0 \le \t \le \frac{\pi}{2}$, \cref{equ:int_of_f_and_g_is_zero} becomes
    \[
      \int_{0}^{\frac{\pi}{2}}\frac{\cos((\l-1)\t)}{\cos\left(\frac{\t}{2}\right)}\cos\left(\frac{\t}{2}\right)g\left(\left|\cos(\t)\right|\right)\,d\t = 0.
    \]
    Changing variables $v = \cos(\t)$, this latter integral is expressable as
    \begin{equation}\label{equ:orthogonality_with_Chebyschev}
      \int_{-1}^{1}V_{\lf\l-1\rf}(v)h(v)\sqrt{\frac{1+v}{1-v}}\,dv = 0,
    \end{equation}
    where $V_{n}$ is the $n$-th Chebyshev polynomial of the third kind and we set
    \[
      h(v) = \chi_{[0,1]}(v)\frac{g(v)}{\sqrt{1+v}}.
    \]
    Suppose $\d = 1$. Then by \cref{equ:orthogonality_with_Chebyschev}, $h(v)$ is orthogonal to all $V_{n}$ for odd $n \ge 3$. So there is a an even function $P(v)$ and constant $a$ such that $h(v) = av+P(v)$. If $v \in [0,1]$, then $h(v) = 2av$ and therefore
    \[
      h(v) = \begin{cases} 0 & v \in [-1,0), \\ 2av & v \in [0,1]. \end{cases}
    \]
    So $g(y) = 2ay\sqrt{1+y}$ for $y \in [0,1]$. By \cref{equ:Mellin_transform_bound_for_g,equ:g_as_Mellin_transform}, $g(y) = O(y^{\frac{5}{2}})$ as $y \to 0$ so that $a = 0$. Now suppose $\d = 3$. Then $h(v)$ is orthogonal to all $V_{n}$ for even $n \ge 4$. So there is an odd function $Q(v)$ and constants $b$ and $c$ such that $h(v) = b+cv^{2}+Q(v)$. If $v \in [0,1]$, we have $h(v) = 2(b+cv^{2})$ so that
    \[
      h(v) = \begin{cases} 0 & v \in [-1,0), \\ 2(b+cv^{2}) & v \in [0,1]. \end{cases}
    \]
    Then $g(y) = 2(b+cy^{2})\sqrt{1+y}$ for $y \in [0,1]$.  Since $\wtilde{g}(u)$ is holomorphic on a strip containing $\Re(u) = -\frac{5}{2}$, shifting the contour in \cref{equ:Mellin_transform_bound_for_g} to $\Re(u) = -\frac{5}{2}-\e$ for some small $\e > 0$, \cref{equ:g_as_Mellin_transform} implies $g(y) = O(y^{\frac{5}{2}+\e})$ as $y \to 0$ so that $b = a = 0$. In either case, it follows that $g$ is identically zero on $[0,1]$. Then the Mellin transform of $g$ takes the form
    \[
      \wtilde{g}(u) = \int_{1}^{\infty}g(y)y^{u-1}\,dy.
    \]
    Since $g(y) \ll y^{\frac{5}{2}}$, the analytic continuation of $\wtilde{g}(u)$ to $\Re(u) < -\frac{5}{2}$ follows. In this region, $\frac{\G_{\C}\left(\frac{1-u+v}{2}\right)\g\left(-\frac{u}{2}\right)}{\G_{\C}(1-u)}$ is analytic and nonvanishing. Since $\wtilde{g}(u)$ is analytic, we conclude $\frac{\G_{\C}\left(\frac{u+1-\conj{\nu}}{2}\right)}{\g\left(1+\frac{u}{2}\right)}$ is analytic for $\Re(u) < -\frac{5}{2}$ too. In this region, the poles of $\G_{\C}\left(\frac{u+1-\conj{\nu}}{2}\right)$ occur at $u = \conj{\nu}-1+2n$ for all $n < -\frac{3+2\conj{\nu}}{4}$. Therefore $\wtilde{\g}(s) = \g(s+\frac{\conj{\nu}+1}{2})$ has poles at all but finitely many nonpositive integers (determined in \cref{tab:table_for_proof_of_lemma}). Applying \cref{lemma:GammaFactor} we get $\wtilde{\g}(s) = C'P(s)H^{s}\G_{\C}(s)$ for some constants $C',H \in \R_{>0}$ and a monic polynomial $P(s)$. In terms of $\g(s)$, we have
     \[
      \g(s) = CP\left(s-\frac{\conj{\nu}+1}{2}\right)H^{s}\G_{\C}\left(s-\frac{\conj{\nu}+1}{2}\right),
     \]
     where $C = C'H^{-\frac{\conj{\nu}+1}{2}}$. In order to finish the theorem we need $P(s)$ to be such that 
      \[
        P\left(s-\frac{\conj{\nu}+1}{2}\right)\G_{\C}\left(s-\frac{\conj{\nu}+1}{2}\right) = \G_{\C}\left(s+\frac{\nu-1}{2}\right),
      \]
      for some $\nu \in \left\{\frac{1}{2},\frac{3}{2},\frac{5}{2},\frac{7}{2}\right\}$ with $2\nu \equiv \d \tmod{4}$. The condition $\Re(\mu_{j}) > -\frac{1}{2}\l_{j}$ implies $\g(s)$ cannot have poles in the region $\Re(s) \ge \frac{1}{2}$. In other words, $\wtilde{\g}(s)$ cannot have poles in the region $\Re(s) \ge -\frac{\conj{\nu}}{2}$. Therefore $P(s)$ needs to have a zero canceling the possible poles coming from $\G_{\C}(s)$ in $\wtilde{\g}(s) = C'P(s)H^{s}\G_{\C}(s)$. \cref{tab:table_for_proof_of_lemma} computes the possible polynomials $P(s)$ and corresponding weights $\nu$ for a given $\conj{\nu}$:

      \begin{table}[ht]
        \begin{center}
        \caption{}\label{tab:table_for_proof_of_lemma}
        \begin{stabular}[1.5]{|c|c|c|c|c|c|}
          \hline
            $\conj{\nu}$ & $\d$ & Poles of $\wtilde{\g}(s)$ & Possible poles of $\wtilde{\g}(s)$ & $P(s)$ & $\nu$ \\
            \hline
            $\conj{\nu} = \frac{1}{2}$ & 3 & $s = n \le -2$ & $s = -1$ & $s$ or $s(s+1)$ & $\nu = \frac{3}{2}$ or $\nu = \frac{7}{2}$ \\
            \hline
            $\conj{\nu} = \frac{3}{2}$ & 1 & $s = n \le -2$ & $s = -1$ & $s$ or $s(s+1)$ & $\nu = \frac{1}{2}$ or $\nu = \frac{5}{2}$ \\
            \hline
            $\conj{\nu} = \frac{5}{2}$ & 3 & $s = n \le -3$ & $s = -2$ & $s(s+1)$ or $s(s+1)(s+2)$ & $\nu = \frac{3}{2}$ or $\nu = \frac{7}{2}$ \\
            \hline
            $\conj{\nu} = \frac{7}{2}$ & 1 & $s = n \le -3$ & $s = -2$ & $s(s+1)$ or $s(s+1)(s+2)$ & $\nu = \frac{1}{2}$ or $\nu = \frac{5}{2}$ \\
            \hline
        \end{stabular}
        \end{center}
      \end{table}
      Upon inspection it is clear that for any $\d$, the possible weights $\nu$ satisfy $2\nu \equiv \d \tmod{4}$.
  \end{proof}

  We can now prove \cref{thm:ConverseTheorem}.

  \begin{proof}[Proof of \cref{thm:ConverseTheorem}]
    We divide the proof into two parts based on whether $F_{\l}(1,1)$ vanishes or not since the methods invovled are seperate:
    \begin{enumerate}[label=(\roman*)]
      \item Suppose $F_{\l}(1,1) \neq 0$ for some $\l \ge \frac{9}{2}$ with $2\l \equiv \d \tmod{4}$. Let $d = \dim\mc{S}_{\l}(\G_{0}(4N),\chi)$. Since the $g$ form a basis for $\mc{S}_{\l}(\G_{0}(4N),\chi)$, we can choose positive integers $n_{1} < n_{2} < \cdots < n_{d}$ such that the $d$ many $d$-dimensional vectors $(\rho_{g}(n_{1}),\ldots,\rho_{g}(n_{d}))$ must be linearly independent. For otherwise, some linear combination of the basis vectors $g$ has $d$ many vanishing Fourier coefficients and hence must be identically zero contradicting the linear independence of the $g$. The definition of $K_{n_{i}}(s,f,\chi)$ is a linear combination of the Rankin-Selberg convolutions $L(s,f \x \conj{g})$. Since the vectors $(\rho_{g}(n_{1}),\ldots,\rho_{g}(n_{d}))$ are linearly independent, we can invert this linear relation so that each $L(s,f \x \conj{g})$ is a linear combination of the $K_{n_{i}}(s,f,\chi)$ for $1 \le i \le d$. By \cref{prop:analytic_continuation_of_K_function}, each $L(s,f \x \conj{g})$ admits meromorphic continuation to the region $\Re(s) > \frac{1}{2}$ with at most a simple pole at $s = 1$. Taking the residue at $s = 1$ gives
      \[
        \frac{\G\left(\l-1\right)}{(4\pi)^{\l-1}}\sum_{g \in \mc{H}_{\l}(\G_{0}(4N),\chi)}\rho_{g}(n)\Res_{s = 1}L(s,f \x \conj{g}) = \w i^{-\l}\frac{1}{2}(4N)^{-1}\prod_{p \mid 4N}(1-p^{-1})F_{\l}(1,1)f_{n},
      \]
      for every $n \ge 1$. As $F_{\l}(1,1) \neq 0$, we may isolate $f_{n}$ and conclude that $f_{n}$ is a linear combination of $n$-th Fourier coefficients of basis elements $g$. Moreover, the coefficients of this linear combination are independent of $n$. Since this linear relation holds for all $n$, $f \in \mc{S}_{\l}(\G_{0}(4N),\chi)$.
      
      \item Suppose $F_{\l}(1,1) = 0$ for all $\l\geq \frac{9}{2}$ with $2\l\equiv \d\tmod{4}$. Using \cref{prop:specify_gamma_factor_if_integral_vanishes}, we may assume that $\gamma(s)=CH^s\G_{\cc}\left(s+\frac{\nu-1}{2}\right)$ where $C,H\in\rr_{>0}$ and $\nu\in\left\{\frac{1}{2},\frac{3}{2},\frac{5}{2},\frac{7}{2}\right\}$ with $2\nu\equiv\d\tmod{4}$. We will show that $H = 1$. Suppose first that $H>1$. Then
      \[
        F_{\l}(1,1) = \frac{1}{2\pi i}\int_{\Re(u) = \s_{1}} H^{-u-1}\frac{\G_{\cc}\left(\frac{\l-1+u}{2}\right)\G_{\cc}\left(\frac{\nu-1-u}{2}\right)}{2\G_{\cc}\left(\frac{\l+1-u}{2}\right)\G_{\cc}\left(\frac{\nu+1+u}{2}\right)}\,du=0.
      \]
      Replacing the $\G_{\cc}$-factors with $\G$-factors and clearing nonzero constants, the integral becomes
      \begin{equation}\label{equ:integral_polynomial_is_zero}
        \frac{1}{2\pi i}\int_{\Re(u) = \s_{1}} H^{-u}\frac{\G\left(\frac{\l-1+u}{2}\right)\G\left(\frac{\nu-1-u}{2}\right)}{\G\left(\frac{\l+1-u}{2}\right)\G\left(\frac{\nu+1+u}{2}\right)}\,du=0.
      \end{equation}
      By Stirling's formula, the integrand in \cref{equ:integral_polynomial_is_zero} is $O(H^{-u}|u|^{-2\Re(u+1)})$. Since $H > 1$, this estimate shows that upon taking $\s_{1} \to \infty$ the integrand vanishes. Therfore we may shift the line of integration in \cref{equ:integral_polynomial_is_zero} all the way to the right by taking $\s_{1} \to \infty$ at the cost of pickup up residues. We obtain the following relation:
      \begin{equation}\label{equ:integral__residue_polynomial_relation}
        \sum_{\rho}\Res_{u = \rho}\left(H^{-u}\frac{\G\left(\frac{\l-1+u}{2}\right)\G\left(\frac{\nu-1-u}{2}\right)}{2\G\left(\frac{\l+1-u}{2}\right)\G\left(\frac{\nu+1+u}{2}\right)}\right) = 0,
      \end{equation}
      where we are summing over all poles $\rho$ of the integrand in \cref{equ:integral_polynomial_is_zero} subject to $\Re(\rho) > \s_{1} > \frac{1-\l}{2}$. For every $\d$, by \cref{prop:specify_gamma_factor_if_integral_vanishes} there are two possible weights $\nu$. For each such $(\d,\nu)$ pair we choose one or two weights $\l \ge \frac{9}{2}$. For each triple $(\d,\nu,\l)$, the residue in \cref{equ:integral__residue_polynomial_relation} is an algebraic relation in $H$:
      \[
        G_{\d,\nu,\l}(H) = 0.
      \]
      Given $(\d,\nu)$, $H$ must be a zero of of this relation for all $\l \ge \frac{9}{2}$ with $2\l \equiv \d \tmod{4}$. \cref{tab:table_for_proof_of_theorem} computes various $G_{\d,\nu,\l}(H)$ and from it we see that for any given tuple $(\d,\nu)$, the corresponding triples $(\d,\nu,\l)$ only have the root $H = 1$ in common. But this contradicts $H$ being common zero larger than $1$. 
      
              \begin{table}[ht]
        \begin{center}
        \caption{}\label{tab:table_for_proof_of_theorem}
        \centering\renewcommand\cellalign{c}
        \setcellgapes{3pt}\makegapedcells
          \begin{stabular}[1]{|c|c|c|c|c|}
          \hline $\d$ & $\nu$ & $\l$ & $G_{\d,\nu,\l}(H)$ & Values of $H$ \\

          \hline $1$ & $\frac{1}{2}$ & $\frac{9}{2}$ &  $-\frac{1}{2}H^{1/2}+3H^{-3/2}-\frac{5}{2}H^{-7/2}$ & $H=1,\sqrt{5}$ \\

          \hline $1$ & $\frac{1}{2}$ & $\frac{13}{2}$ & $-\frac{1}{4}H^{1/2}+\frac{15}{4}H^{-3/2}-\frac{35}{4}H^{-7/2}+\frac{21}{4}H^{-11/2}$& $H=1,\sqrt{7\pm2\sqrt{7}}$\\

          \hline $1$ & $\frac{5}{2}$ & $\frac{9}{2}$ & $-2H^{-3/2}+2H^{-7/2}$ & $H=1$  \\

          \hline $3$ & $\frac{3}{2}$ & $\frac{11}{2}$ & $-\frac{3}{2}H^{-1/2}+5H^{-5/2}-\frac{7}{2}H^{-9/2}$ & $H=1,\sqrt{\frac{7}{3}}$ \\

          \hline $3$ & $\frac{3}{2}$ & $\frac{15}{2}$ &  $-\frac{5}{4}H^{-1/2}+\frac{35}{4}H^{-5/2}-\frac{63}{4}H^{-9/2}+\frac{33}{4}H^{-13/2}$ & $H=1,\sqrt{\frac{1}{5}(15\pm2\sqrt{15})}$\\

          \hline $3$ & $\frac{7}{2}$ & $\frac{11}{2}$ & $-2H^{-5/2}+2H^{-9/2}$ & $H=1$\\
          \hline
        \end{stabular}
      \end{center}
      \end{table}
      
      Now assume $H \le 1$. In this case, the integrand in \cref{equ:integral_polynomial_is_zero} does not vanish as $\s_{1} \to \infty$ and the previous arguemnt cannot be used. Instead, we appeal to a variant of Hecke's converse theorem. By \cref{prop:Hecke_modularity_variant}, $f$ satisfies
      \[
        f\bigg\vert\begin{pmatrix}aH^2 & -1\\ cH^2 & 0\end{pmatrix}=\w i^{-\nu}\chi(\overline{a})\epsilon_a^{-2\nu}\legendre{c}{a}f\bigg\vert\begin{pmatrix}
        1 & -\overline{a}\\ 0 & c \end{pmatrix},
      \]
      for $c\in 4N\zz_{>0}$ and $a,\overline{a}\in\zz$ with $a\overline{a}\equiv1\pmod{c}$. We can express this action more convienently as 
    \begin{equation}\label{eq:SlashOperatorModularity}
    f\bigg\vert\begin{pmatrix}aH & \frac{a\overline{a}H^2-1}{H}\\ cH & \overline{a}H\end{pmatrix}=\w i^{-\nu}\chi(\overline{a})\epsilon_a^{-2\nu}\legendre{c}{a}f.
    \end{equation}
    The argument presented in \cite{booker2022extension} to show that $H = 1$ and $\w = i^{\nu}$ works vertabium. Thus, the matrices in \cref{eq:SlashOperatorModularity} are of the form
    \[
      M =\begin{pmatrix}
        a & \frac{a\overline{a}-1}{c}\\
        c & \overline{a}
    \end{pmatrix},
    \]
    and letting $\chi(M)=\chi(\overline{a})$, \cref{eq:SlashOperatorModularity} can be compactly expressed as
    \[
      f\vert M=\chi(M)\epsilon_a^{-2\nu}\legendre{c}{a}f.
    \]
    This is equivalent to the modularity condition for half-integral weight forms. The matrices $M$ along with $\begin{psmallmatrix}
        1 & 0\\0 & 1
    \end{psmallmatrix}$ and $\begin{psmallmatrix}
        1 & 1\\ 0 & 1
    \end{psmallmatrix}$ generate $\G_0(4N)$. Therefore, $f\in S_\lambda(\Gamma_0(4N),\chi)$ as desired.
    \end{enumerate}    
  \end{proof}

\appendix
\section{Functional Equations for Additive Twists}\label{append:functional_equation_additive_twist}
  Let $f(z)$ be a half-integral weight holomorphic form on $\G_{0}(4N)\backslash\H$ of weight $\l$ twisted by a Dirichlet character $\chi$ modulo $4N$. Let $f(z) = \sum_{n \ge 1}f_{n}n^{\frac{\l-1}{2}}e(nz)$ be the Fourier expansion of $f$. The following proposition proves the analytic continuation and function equation of the additive twist $\L_{f}(s,\frac{a}{c})$.
  
  \begin{proposition}
    For any $c \in 4N\Z_{> 0}$ and $a,\conj{a} \in \Z$ with $a\conj{a} \equiv 1 \tmod{c}$, $\L_{f}(s,\frac{a}{c})$ and $\L_{f}(s,-\frac{\conj{a}}{c})$ admit analytic continuation to $\C$ and satisfy the functional equation
    \begin{equation}
      \L_{f}\left(s,\frac{a}{c}\right) = i^{\l}\chi(\conj{a})\e_{a}^{-2\l}\legendre{c}{a}c^{1-2s}\L_{f}\left(1-s,-\frac{\conj{a}}{c}\right).
    \end{equation}
  \end{proposition}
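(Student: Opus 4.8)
The plan is to realize $\L_f(s,\tfrac ac)$ as a Mellin transform of $f$ along a vertical line and to push through a single modular substitution. For $\Re(s)$ large, expanding the Fourier series of $f$ and integrating term by term against the $\G$-integral gives, after the interchange is justified by absolute convergence of $\sum_n f_n n^{-s}$,
\[
  \L_f\!\left(s,\tfrac ac\right)=2\int_0^\infty f\!\left(\tfrac ac+iy\right)y^{\,s+\frac{\l-1}{2}}\,\frac{dy}{y}.
\]
I would split this at $y=1/c$. The tail $\int_{1/c}^\infty$ is entire in $s$ since $f(\tfrac ac+iy)=\sum_{n\ge1}f_n n^{\frac{\l-1}{2}}e(n\tfrac ac)e^{-2\pi ny}=O(e^{-2\pi y})$ decays exponentially; the head over $(0,1/c)$ will be handled by the substitution $y\mapsto\tfrac1{c^2y}$, which turns it into a tail-type integral, manifestly entire. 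This yields the claimed analytic continuation of $\L_f(s,\tfrac ac)$, and likewise of $\L_f(s,-\tfrac{\conj a}{c})$.

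The functional equation comes from that same substitution together with the matrix identity
\[
  \begin{pmatrix} ac & -1\\ c^2 & 0\end{pmatrix}
  =\begin{pmatrix} a & \tfrac{a\conj a-1}{c}\\ c & \conj a\end{pmatrix}\begin{pmatrix} c & -\conj a\\ 0 & c\end{pmatrix},
\]
whose middle factor $\g$ lies in $\G_0(4N)$ precisely because $4N\mid c$ (and $a\conj a\equiv1\pmod c$ makes its upper-right entry an integer, with $a,\conj a$ both odd since $c$ is even). Reading this as Möbius maps shows $\tfrac ac+\tfrac{i}{c^2y}=\g\!\left(-\tfrac{\conj a}{c}+iy\right)$ with $\g=\begin{psmallmatrix} a & \ast\\ c & \conj a\end{psmallmatrix}$, so the transformation law quoted in the introduction (applied to $\g\in\G_0(4N)$, where $(c(z-\tfrac{\conj a}{c})+\conj a)^\l=(cz)^\l$) gives
\[
  f\!\left(\tfrac ac+\tfrac{i}{c^2y}\right)=\chi(\conj a)\,\e_{\conj a}^{-2\l}\legendre{c}{\conj a}(ciy)^{\l}\,f\!\left(-\tfrac{\conj a}{c}+iy\right).
\]
Substituting $y\mapsto\tfrac1{c^2y}$ in the Mellin integral above, inserting this identity, and bookkeeping exponents via $\l-\bigl(s+\tfrac{\l-1}{2}\bigr)=(1-s)+\tfrac{\l-1}{2}$ and $c^{\,\l-2s-(\l-1)}=c^{1-2s}$, collapses everything (the head and tail pieces recombining into the full integral for $\L_f(1-s,-\tfrac{\conj a}{c})$) to
\[
  \L_f\!\left(s,\tfrac ac\right)=i^{\l}\chi(\conj a)\,\e_{\conj a}^{-2\l}\legendre{c}{\conj a}\,c^{1-2s}\,\L_f\!\left(1-s,-\tfrac{\conj a}{c}\right).
\]

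It then remains to reconcile this constant with the one in the statement, i.e.\ to check $\e_{\conj a}^{-2\l}\legendre{c}{\conj a}=\e_a^{-2\l}\legendre{c}{a}$ under $4N\mid c$ and $a\conj a\equiv1\pmod c$; this follows from quadratic reciprocity for the modified Jacobi symbol together with $\e_d^{2}=\legendre{-1}{d}$ and the fact that $a,\conj a$ are odd. I expect this last step — keeping the theta-multiplier data straight through Shimura's conventions for $\tlegendre{\,\cdot\,}{\,\cdot\,}$ and $\e_d$, and fixing the branch of $(ciy)^\l$ on the positive imaginary axis — to be the only delicate point of the argument; the Mellin interchange (Fubini), the change of variables, and the convergence of all the integrals involved are routine.
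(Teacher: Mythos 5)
Your proposal is correct and takes essentially the same route as the paper's proof: realize $\L_f\left(s,\frac{a}{c}\right)$ as a Mellin transform of $f$ along the vertical ray at $\frac{a}{c}$, apply the transformation law for $\gamma = \begin{psmallmatrix} a & (a\conj{a}-1)/c \\ c & \conj{a} \end{psmallmatrix} \in \G_0(4N)$, and reconcile the multiplier via $\conj{a} \equiv a \tmod{4}$ and quadraticity of the modified Jacobi symbol, exactly as the paper does. Your splitting of the integral at $y = 1/c$ is if anything a more careful justification of the analytic continuation than the paper's one-line appeal to rapid decay, but it is not a different argument.
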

  \begin{proof}
    Let $\g = \begin{psmallmatrix} a & b \\ c & d \end{psmallmatrix} \in \G_{0}(4N)$. Set $z = -\frac{d}{c}+\frac{i}{tc}$ for $t > 0$ so that $\g z = \frac{a}{c}-\frac{t}{ic}$. Then
    \begin{equation}\label{equ:modular_transform_1}
      f\left(\frac{a}{c}-\frac{t}{ic}\right) = \sum_{n \ge 1}f_{n}n^{\frac{\l-1}{2}}e^{-2\pi n\frac{t}{c}}e\left(n\frac{a}{c}\right).
    \end{equation}
    On the other hand, by modularity
    \[
      f\left(\frac{a}{c}-\frac{t}{ic}\right) = i^{\l}\chi(d)\e_{d}^{-2\l}\legendre{c}{d}\sum_{n \ge 1}f_{n}n^{\frac{\l-1}{2}}e^{-2\pi n\frac{1}{tc}}e\left(-n\frac{d}{c}\right)t^{-\l}.
    \]
    As $d \equiv \conj{a} \tmod{c}$, $d \equiv a \tmod{4}$, and the modified Jacobi symbol is quadratic, we can express the previous identity as
    \begin{equation}\label{equ:modular_transform_2}
      f\left(\frac{a}{c}-\frac{t}{ic}\right) = i^{\l}\chi(\conj{a})\e_{a}^{-2\l}\legendre{c}{a}\sum_{n \ge 1}f_{n}n^{\frac{\l-1}{2}}e^{-2\pi n\frac{1}{tc}}e\left(-n\frac{\conj{a}}{c}\right)t^{-\l}.
    \end{equation}
    Taking the Mellin transform of \cref{equ:modular_transform_1} at $s+\frac{\l-1}{2}$ in the variable $t$ we obtain
    \begin{equation}\label{equ:additive_twist_mellin_1}
      c^{s+\frac{\l-1}{2}}\L_{f}\left(s,\frac{a}{c}\right) = \int_{0}^{\infty}f\left(\frac{a}{c}-\frac{t}{ic}\right)t^{s+\frac{\l-1}{2}}\,dt.
    \end{equation}
    Similarly, taking the Mellin transform of \cref{equ:modular_transform_2} yields
    \begin{equation}\label{equ:additive_twist_mellin_2}
      i^{\l}\chi(\conj{a})\e_{a}^{-2\l}\legendre{c}{a}c^{(1-s)+\frac{\l-1}{2}}\L_{f}\left(1-s,-\frac{\conj{a}}{c}\right) = \int_{0}^{\infty}f\left(\frac{a}{c}-\frac{t}{ic}\right)t^{s+\frac{\l-1}{2}}\,dt.
    \end{equation}
    Equating these two transforms gives to the desired functional equation. The analytic continuation follows from the fact that the integrals in \cref{equ:additive_twist_mellin_1,equ:additive_twist_mellin_2} are analytic via the rapid decay of $f$ and the functional equation just established.
  \end{proof}

\section{An Extension of Hecke's Modularity Argument}
  Let $f(z)$, $\g(s)$, and $\Lambda_f\left(s,\frac{a}{c}\right)$ as in \cref{thm:ConverseTheorem}. The aim of this appendix is to provide a variant of Hecke's modularity relation when $\g(s)$ is given explicitely.

  \begin{proposition}\label{prop:Hecke_modularity_variant}
    Let $f(z)$, $\g(s)$, $\w$, and $\Lambda_f\left(s,\frac{a}{c}\right)$ be as in \cref{thm:ConverseTheorem}. Moreover, suppose that $\g(s) = CH^{s}\G_{\C}\left(s+\frac{\nu-1}{2}\right)$ as in \cref{prop:specify_gamma_factor_if_integral_vanishes}. Then
    \[
      f\bigg\vert\begin{pmatrix}aH^2 & -1\\ cH^2 & 0\end{pmatrix}=\w i^{-\nu}\chi(\overline{a})\epsilon_a^{-2\nu}\legendre{c}{a}f\bigg\vert\begin{pmatrix}
        1 & -\overline{a}\\ 0 & c \end{pmatrix}.
    \]
  \end{proposition}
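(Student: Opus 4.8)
The plan is to convert the additively twisted functional equation \cref{equ:functional_equation_assumption} into a transformation law for $f$ along the imaginary axis by Mellin inversion, and then to recognize that law as the asserted slash‑operator identity. Here $f(z) = \sum_{n\ge1}f_nn^{\frac{\nu-1}{2}}e(nz)$, and I write $f|_{\nu}M(z) = (\det M)^{\nu/2}(c_Mz+d_M)^{-\nu}f(Mz)$ for $M = \begin{psmallmatrix}a_M & b_M\\ c_M & d_M\end{psmallmatrix}$ with positive determinant, taking principal branches throughout. Since $2\nu \equiv \d \tmod{4}$ and $\e_a^{4} = 1$, we have $\e_a^{-\d} = \e_a^{-2\nu}$, so \cref{equ:functional_equation_assumption} reads $\L_f(s,\tfrac{a}{c}) = \w\chi(\conj{a})\e_a^{-2\nu}\legendre{c}{a}c^{1-2s}\L_f(1-s,-\tfrac{\conj{a}}{c})$.

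The first step is the elementary Mellin‑inversion identity: for $\a\in\Q$, $t>0$, and $\sigma>1$,
\[
  f(\a+it) = \frac{t^{-\frac{\nu-1}{2}}}{2C}\cdot\frac{1}{2\pi i}\int_{\Re(s)=\sigma}\L_f(s,\a)(Ht)^{-s}\,ds.
\]
This follows by writing $f(\a+it) = \sum_{n\ge1}f_nn^{\frac{\nu-1}{2}}e(n\a)e^{-2\pi nt}$, inserting $e^{-x} = \frac{1}{2\pi i}\int_{(\sigma_0)}\G(w)x^{-w}\,dw$ for $\sigma_0>0$, interchanging the sum and integral (legitimate for $\sigma$ large by property (1) of \cref{thm:ConverseTheorem}), shifting $w = s+\tfrac{\nu-1}{2}$, and using $\g(s) = CH^s\G_{\C}(s+\tfrac{\nu-1}{2})$ together with $\G_{\C}(s) = 2(2\pi)^{-s}\G(s)$.

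The second step feeds in the functional equation. I apply the identity above with $\a = \tfrac{a}{c}$ and $t = \tfrac{1}{cH^2y}$, substitute \cref{equ:functional_equation_assumption}, change variables $s\mapsto 1-s$, and shift the contour back to $\Re(s)=\sigma$. The shift crosses no poles because $\L_f(s,-\tfrac{\conj{a}}{c})$ is entire, and it is justified because $\L_f = \g\cdot L_f$ decays exponentially on vertical lines (Stirling applied to $\g$, with $L_f$ bounded on $\Re(s)>1$) and is of finite order, so Phragm\'en--Lindel\"of yields enough decay across the intervening strip. After collecting powers of $Hy$ the right‑hand side becomes exactly the Mellin representation of $f\big(-\tfrac{\conj{a}}{c}+\tfrac{iy}{c}\big)$, giving
\[
  f\Big(\frac{a}{c}+\frac{i}{cH^2y}\Big) = \w\,\chi(\conj{a})\,\e_a^{-2\nu}\legendre{c}{a}(Hy)^{\nu}f\Big(-\frac{\conj{a}}{c}+\frac{iy}{c}\Big),\qquad y>0.
\]

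The final step is to unwind the two slash operators along $z=iy$. A direct computation (using $\tfrac1i=-i$ and $(iw)^{-\nu}=w^{-\nu}i^{-\nu}$ for $w>0$) expresses $f|\begin{psmallmatrix}aH^2 & -1\\ cH^2 & 0\end{psmallmatrix}(iy)$ as $c^{-\nu/2}H^{-\nu}y^{-\nu}i^{-\nu}f\big(\tfrac{a}{c}+\tfrac{i}{cH^2y}\big)$ and $f|\begin{psmallmatrix}1 & -\conj{a}\\ 0 & c\end{psmallmatrix}(iy)$ as $c^{-\nu/2}f\big(-\tfrac{\conj{a}}{c}+\tfrac{iy}{c}\big)$, so the transformation law just obtained is precisely the claimed identity evaluated at $z=iy$. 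Both sides of the proposition are holomorphic functions of $z$ on $\H$ — each is $f$ precomposed with a fractional linear map of positive determinant, times a holomorphic automorphy factor (note $c_Mz+d_M$ avoids $(-\infty,0]$ on $\H$ since $cH^2>0$, resp.\ $c>0$) — and they agree on $i\R_{>0}$, hence on all of $\H$ by the identity theorem. The main obstacle is the analytic bookkeeping of the second step, above all the contour shift; this runs exactly as in the integral‑weight treatment of \cite{booker2022extension}, the only new feature being the theta‑multiplier factor $\e_a^{-2\nu}\legendre{c}{a}$, which is carried along unchanged.
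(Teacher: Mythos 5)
Your proposal is correct and follows essentially the same route as the paper: Mellin-invert both sides of the additively twisted functional equation, recognize each side as $f$ evaluated along the relevant geodesic with the appropriate power of $Hy$, and extend from $z=iy$ to all of $\H$ by the identity theorem before translating into slash notation. If anything, you are slightly more careful than the paper in justifying the contour shift between $\Re(s)=\sigma$ and $\Re(s)=1-\sigma$ (via entireness, finite order, and Phragm\'en--Lindel\"of), a step the paper's proof leaves implicit when equating its two inverse Mellin transforms taken on different vertical lines.
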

  \begin{proof}
    Let $s = \s+it$ with $ \s > \frac{3}{2}$. Then $L_{f}(s,\frac{a}{c}) \ll 1$ and by Stirling's formula $\g(s) \ll (1+t)^{-1}$ uniformly as $t \to \infty$. Therefore the Mellin inverse of $\L_{f}\left(s,\frac{a}{c}\right)$ exists. The inverse Mellin transform of $\L_{f}\left(s,\frac{a}{c}\right)$ at $\frac{1}{cHy}$ in the variable $s$ is
    \[
      \frac{1}{2\pi i}\int_{\Re(s)=\s}\left(\frac{1}{cHy}\right)^{-s}\Lambda_f\left(s,\frac{a}{c}\right)\,ds.
    \]
   Opening up $\Lambda_f\left(s,\frac{a}{c}\right) = \g(s)L_{f}\left(s,\frac{a}{c}\right)$ and interchanging with the integral gives
    \begin{equation}\label{equ:Hecke_int_1}
      2C\left(\frac{1}{cH^2y}\right)^{\frac{\nu-1}{2}}\sum_{n\geq 1}f_nn^{\frac{\nu-1}{2}}e\left(n\frac{a}{c}\right)\frac{1}{2\pi i}\int_{\Re(s)=\s}\left(\frac{2\pi n}{cH^2y}\right)^{-\left(s+\frac{\nu-1}{2}\right)}\Gamma\left(s+\frac{\nu-1}{2}\right)\,ds.
    \end{equation}
    The integral in \cref{equ:Hecke_int_1} is the inverse Mellin transform of an exponential function:
    \begin{equation}\label{equ:Hecke_int_2}
      \frac{1}{2\pi i}\int_{\Re(s)=\s}\left(\frac{2\pi n}{cH^2y}\right)^{-\left(s+\frac{\nu-1}{2}\right)}\Gamma\left(s+\frac{\nu-1}{2}\right)\,ds = e^{-\frac{2\pi n}{cH^2y}}.
    \end{equation}
    Substiuting \cref{equ:Hecke_int_2} into Expression (\ref{equ:Hecke_int_1}) and rewriting the resulting Fourier series, we obtain
    \begin{equation}\label{equ:Hecke_modularity_variant_side_1}
      2C \left(\frac{1}{cH^2y}\right)^{\frac{\nu-1}{2}}f\left(\frac{\frac{-1}{H^2yi}+a}{c}\right).
    \end{equation}
    Now let $\s < \frac{1}{2}$. The same esimates for $\L_{f}\left(1-s,\frac{a}{c}\right)$ hold as for $\L_{f}\left(s,\frac{a}{c}\right)$ in the region $\s > \frac{3}{2}$, so the inverse Mellin transform of the right-hand side in \cref{equ:functional_equation_assumption} exists. Taking the inverse Mellin transform, and noting that $2\nu \equiv \d \tmod{4}$ we obtain
    \[
      \frac{1}{2\pi i}\int_{\Re(s)=\s}\left(\frac{1}{cHy}\right)^{-s}\w\chi(\overline{a})\epsilon_a^{-2\nu}\legendre{c}{a}c^{1-2s}\Lambda_f\left(1-s,-\frac{\overline{a}}{c}\right)\,ds.
    \]
    Opening up $\L_{f}\left(1-s,\frac{a}{c}\right) = \g(1-s)L_{f}\left(1-s,\frac{a}{c}\right)$ and gathering terms with a power of $1-s$ together gives
    \[
      2CHy\w\chi(\overline{a})\epsilon_a^{-2\nu}\legendre{c}{a}\frac{1}{2\pi i}\int_{\Re(s)=\s}\left(\frac{c}{y}\right)^{1-s}(2\pi)^{1-s-\frac{\nu-1}{2}}\Gamma\left(1-s+\frac{\nu-1}{2}\right)\sum_{n\geq 1}\frac{f_ne\left(-n\frac{\overline{a}}{c}\right)}{n^{1-s}}\,ds.
    \]
    Interchanging the sum and integral, and multiplying and dividing by $\left(\frac{y}{c}\right)^{\frac{\nu-1}{2}}$, we arrive at 
    \begin{equation}\label{equ:Hecke_int_3}
      \begin{aligned}
        2CHy\w\chi(\overline{a})\epsilon_a^{-2\nu}\legendre{c}{a}\left(\frac{y}{c}\right)^{\frac{\nu-1}{2}}&\sum_{n\geq 1}f_nn^{\frac{\nu-1}{2}}e\left(-n\frac{\overline{a}}{c}\right) \\
        &\cdot\frac{1}{2\pi i}\int_{\Re(s)=\s}\left(\frac{2\pi ny}{c}\right)^{-\left(1-s+\frac{\nu-1}{2}\right)}\Gamma\left(1-s+\frac{\nu-1}{2}\right)\,ds.
      \end{aligned}
    \end{equation}
    The integral in Expression (\ref{equ:Hecke_int_3}) is the inverse Mellin transform of an exponential function:
    \begin{equation}\label{equ:Hecke_int_4}
      \frac{1}{2\pi i}\int_{\Re(s)=\s}\left(\frac{2\pi ny}{c}\right)^{-\left(1-s+\frac{\nu-1}{2}\right)}\Gamma\left(1-s+\frac{\nu-1}{2}\right)\,ds = e^{-\frac{2\pi ny}{c}}.
    \end{equation}
    Inserting \cref{equ:Hecke_int_4} into Expression (\ref{equ:Hecke_int_3}) and rewriting the resulting Fourier series yields
    \begin{equation}\label{equ:Hecke_modularity_variant_side_2}
      2CHy\w\chi(\overline{a})\epsilon_a^{-2\nu}\legendre{c}{a}\left(\frac{y}{c}\right)^{\frac{\nu-1}{2}}f\left(\frac{iy-\overline{a}}{c}\right).
    \end{equation}
    Expressions (\ref{equ:Hecke_modularity_variant_side_1}) and (\ref{equ:Hecke_modularity_variant_side_2}) are equal since they are invese Mellin transforms of either side of \cref{equ:functional_equation_assumption}. But this equality holds for all $iy$ with $y > 0$ and hence for all $z$ by the identity theorem. As a result, we obtain
    \[
      2C \left(\frac{1}{-cH^2iz}\right)^{\frac{\nu-1}{2}}f\left(\frac{\frac{-1}{H^2z}+a}{c}\right)=2CH(-iz)\w\chi(\overline{a})\epsilon_a^{-2\nu}\legendre{c}{a}\left(\frac{-iz}{c}\right)^{\frac{\nu-1}{2}}f\left(\frac{z-\overline{a}}{c}\right).
    \]
    Moving all constants to the right-hand side gives the more compact relation
    \[
      f\left(\frac{\frac{-1}{H^2z}+a}{c}\right)
      =
      \w(-iHz)^{\nu}\chi(\overline{a})\epsilon_a^{-2\nu}\legendre{c}{a}f\left(\frac{z-\overline{a}}{c}\right),
    \]
    which is equivalent to the statement of the proposition in terms of slash notation.
  \end{proof}

\bibliographystyle{alpha}
\bibliography{reference}

\end{document}